\def\mbold{{\mathbf{m}}}
\def\({\left(}
\def\){\right)}
\def\fl#1{\left\lfloor#1\right\rfloor}
\newcommand{\be}{\begin{equation}}
\newcommand{\ee}{\end{equation}}
\def\balpha{{\bm\alpha}}
\def\bnu{{\bm\nu}}
\def\Nab{N_{\balpha,\mbold}}
\def\RQ{{\R\setminus\Q}}
\def\vh{\bm h}
\newcommand{\flo}[1]{\lfloor #1 \rfloor}
\newcommand{\nearint}[1]{\left\llbracket #1 \right\rrbracket}
\title[Greatest common divisor of integer parts of polynomials]
{On the greatest common divisor of\\ integer parts of polynomials}
\author[W.~D.~Banks]{William Banks}
\address{Department of Mathematics, 
 University of Missouri, 
 Columbia MO 65211, USA}
\email{bankswd@missouri.edu}
\author[I. E. Shparlinski] {Igor E. Shparlinski}
\address{Department of Pure Mathematics,
 University of New South Wales,
 Sydney, NSW 2052, Australia}
\email{igor.shparlinski@unsw.edu.au}
\date{\today}
\begin{document}

\begin{abstract} Motivated by a question of V.~Bergelson and F.~K.~Richter (2017), we obtain asymptotic formulas for the number of relatively 
prime tuples composed of positive integers $n\le N$ and integer parts of 
polynomials evaluated at $n$. The error terms in our formulas 
are of various strengths depending on the 
Diophantine properties of the leading coefficients of these polynomials. 
\end{abstract}
 
\keywords{Greatest common divisor, polynomials,
integer parts, exponential sums}
 \subjclass[2010]{11B83, 11J54, 11L07}

\maketitle

\vskip0.5in

\section{Introduction}

\subsection{Motivation} 
Let $\fl{t}$ and $\{t\}$ denote the integer and fractional parts of
a real number $t$, respectively; thus, $t=\fl{t}+\{t\}$
for all $t\in\R$.

Watson~\cite{Wat}, answering a question of K.~F.~Roth, proved that 
for any given irrational number $\alpha$,
the set of positive integers $n$ for which
$\gcd(n,\fl{\alpha n})=1$ has a natural density
\be\label{eq:Wat}
\delta\(\{n\in\N:~\gcd(n,\fl{\alpha n})=1\}\)=\frac{6}{\pi^2}.
\ee
In the same paper, Watson showed that a similar
result holds for all rational numbers $\alpha$, albeit with a
natural density that depends on $\alpha$ and differs from $6/\pi^2$. 
Shortly thereafter,
Estermann~\cite{Ester} gave a different proof
of a slight generalization of Watson's theorem.
Later, Erd\H os and Lorentz~\cite{ErdLor} gave
sufficient conditions for a differentiable function
$f:[1,\infty)\to\R$ to satisfy
$$
\delta\(\{n\in\N:~\gcd(n,\fl{f(n)})=1\}\)=\frac{6}{\pi^2}.
$$
The problem of finding functions $f$ with this property has been studied by
several authors; see~\cite{BergRich} for
a historical account of these results.

The present paper is inspired by a result of Bergelson and 
Richter~\cite{BergRich}, which asserts that the natural density
$$
\delta(\{n\in\N:~\gcd(n,\fl{f_1(n)},\ldots,\fl{f_k(n)})=1\})
=\frac{1}{\zeta(k+1)}
$$ 
for any functions $f_1,\ldots,f_k$ belonging to a given
Hardy field $\cH$ and satisfying some mild conditions. Here,
$\zeta(s)$ is the Riemann zeta function. At the end
of their paper as a natural
extension to Watson's original result~\eqref{eq:Wat}, 
Bergelson and Richter pose the following
question (see~\cite[Question~1]{BergRich}):

\begin{question} 
\label{quest:gcd}
Let $\alpha_1,\ldots,\alpha_k$ be irrational numbers.
Is it true that the natural density of the set
\be\label{eq:caesar}
\{n\in\N:~\gcd(n,\fl{\alpha_1n},\ldots,\fl{\alpha_k n^k})=1\}
\ee
exists and is equal to $1/\zeta(k+1)$? 
\end{question}

 In this paper, we show this question has an affirmative answer
whenever the numbers $\{\alpha_j\}$ satisfy some mild Diophantine conditions.
For example, when all of the numbers $\{\alpha_j\}$ are of {\it finite type\/},
we establish an asymptotic formula with a strong bound on the error term.
Our techniques also apply to certain classes of \emph{Liouville numbers} 
(i.e., numbers of infinite type) but with
somewhat weaker bounds on the error terms. We note that, in complete
generality, the original question remains open.

To formulate our various results precisely,
we first recall some standard notions from the theory of Diophantine 
approximations.

\subsection{Types of irrational numbers}
Let $\nearint{t}$ denote the
distance from a real number $t$ to the nearest integer:
\be\label{eq:near-int}
\nearint{t}\defeq \min\limits_{n\in\Z}|t-n|\qquad(t\in\R).
\ee
For any irrational number $\alpha$, we define its \emph{type}
$\tau$ by the relation
$$
\tau\defeq\sup\left\{\vartheta\in\R:
\mathop{\underline{\rm lim}}\limits_{q\in\N}
~q^\vartheta\nearint{q\alpha}=0\right\}.
$$
We say that $\alpha$ is of \emph{finite type}
when $\tau<\infty$. Using Dirichlet's approximation theorem,
one sees that $\tau\ge 1$ for every irrational $\alpha$.
The celebrated theorems of Khinchin~\cite{Khin} and of 
Roth~\cite{Roth1, Roth2}
assert that $\tau=1$ for almost all real (in the sense of the
Lebesgue measure) and all irrational algebraic numbers $\alpha$,
respectively. 

Similarly, for any $\alpha\in\RQ$ we define its 
\emph{exponential type} $\tau_\star$ by
$$
\tau_\star\defeq \sup\left\{\vartheta\in\R:~
\mathop{\underline{\rm lim}}\limits_{q\in\N}
~\exp(q^\vartheta)\nearint{\alpha q}=0\right\}.
$$
We say that $\alpha$ is of \emph{finite exponential type}
whenever $\tau_\star<\infty$. Note that if $\alpha$ is of 
finite type $\tau$, then its exponential type $\tau_\star$ is
also finite, and one has $\tau_\star\le\tau$. The converse
is not true in general.

\subsection{Statement of results}
Let $k\ge 1$ be a fixed integer. Given a sequence
\be\label{eq:alpha}
\balpha\defeq (\alpha_j)_{j=1}^k
\ee 
of irrational real numbers and a sequence
$\mbold\defeq (m_j)_{j=1}^k$ of integers such that
$$
1=m_1<m_2<\cdots<m_k,
$$
denote by $\Nab(x)$ the number of positive integers $n\le x$
that satisfy
\be\label{eq:gcd-condn}
\gcd(n,\flo{\alpha_1n^{m_1}},\flo{\alpha_2n^{m_2}},
\ldots,\flo{\alpha_kn^{m_k}})=1.
\ee

\newpage

\begin{theorem}\label{thm:finite-type}
Let $\balpha$  as in~\eqref{eq:alpha} be such that every $\alpha_j$ is an
irrational number of finite type not exceeding $\tau$. Then
the estimate 
$$
\Nab(x)=\frac{x}{\zeta(k+1)}+O\(x^{1-\gamma+o(1)}\)
\qquad(x\to\infty),
$$
holds with
$$
\gamma\defeq\begin{cases}
(3\tau+2)^{-1}&\quad\hbox{if $k=1$},\\
\frac{1}{8}\min\bigl\{(m_k\tau)^{-1},(m_k^2-m_k)^{-1}\bigr\}
&\quad\hbox{if $k\ge 2$}.\\
\end{cases}
$$
\end{theorem}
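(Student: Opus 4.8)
The plan is to remove the coprimality condition by M\"obius inversion and then reduce the count to a question about the distribution of fractional parts of polynomial values in very short intervals. Writing $\mathbf{1}[\gcd(\cdot)=1]=\sum_{d\mid\gcd(\cdot)}\mu(d)$ and interchanging the order of summation gives $\Nab(x)=\sum_{d\le x}\mu(d)S_d(x)$, where $S_d(x)$ is the number of $n\le x$ with $d\mid n$ and $d\mid\flo{\alpha_jn^{m_j}}$ for every $j$. Since $d\mid\flo{\alpha_jn^{m_j}}$ is equivalent to $\{\alpha_jn^{m_j}/d\}\in[0,1/d)$, the substitution $n=d\ell$ (forced by $d\mid n$) yields
\[
S_d(x)=\#\bigl\{\ell\le x/d:\ \{\alpha_j d^{m_j-1}\ell^{m_j}\}\in[0,1/d)\ \text{for }1\le j\le k\bigr\}.
\]
The heuristic main term is $(x/d)(1/d)^k=x/d^{k+1}$, and since $\sum_{d\ge1}\mu(d)/d^{k+1}=1/\zeta(k+1)$ this already accounts for the leading term; everything else is the error.

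For each $d$ I would apply the Erd\H os--Tur\'an inequality — in the $k$-dimensional Erd\H os--Tur\'an--Koksma form when $k\ge2$ — to the points $\bigl(\alpha_jd^{m_j-1}\ell^{m_j}\bigr)_{j\le k}$ with $\ell\le x/d$, obtaining for a parameter $H$ that
\[
S_d(x)=\frac{x}{d^{k+1}}+O\!\left(\frac{x/d}{H}+\sum_{0<\|\vh\|_\infty\le H}\frac{1}{r(\vh)}\,\Bigl|\sum_{\ell\le x/d}e\Bigl(\sum_{j\le k}h_j\alpha_j d^{m_j-1}\ell^{m_j}\Bigr)\Bigr|\right),
\]
where $r(\vh)=\prod_j\max(1,|h_j|)$ and $e(t)=e^{2\pi it}$. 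The inner sum is a Weyl sum for the polynomial $\sum_jh_j\alpha_jd^{m_j-1}t^{m_j}$, of degree $M(\vh)=\max\{m_j:h_j\ne0\}\le m_k$ and leading coefficient $\theta=h_{j^\ast}\alpha_{j^\ast}d^{m_{j^\ast}-1}$ with $j^\ast=\max\{j:h_j\ne0\}$. When $k=1$ this is a geometric progression bounded by $\min\!\bigl(x/d,\nearint{h\alpha_1}^{-1}\bigr)$, and the finite-type hypothesis $\nearint{h\alpha_1}^{-1}\ll_\varepsilon h^{\tau+\varepsilon}$ gives, after optimizing $H$, the bound $S_d(x)=x/d^2+O\bigl((x/d)^{\tau/(\tau+1)+\varepsilon}\bigr)$. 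When $k\ge2$ I would use Weyl's inequality (equivalently, repeated van der Corput differencing) together with a Dirichlet approximation $|\theta-a/q|\le1/q^2$; the crucial observation is that $\theta$ is an integer multiple of $\alpha_{j^\ast}$, so the finite-type bound forces the Dirichlet denominator to satisfy $q\gg_\varepsilon R^{1/\tau-\varepsilon}/\bigl(|h_{j^\ast}|d^{m_{j^\ast}-1}\bigr)$ (where $R$ is the Dirichlet parameter), which is exactly what is needed to control the $1/q$ term. Balancing $1/q$ against $q/(x/d)^{M(\vh)}$ produces a power saving whose exponent is controlled by $m_k$ and $\tau$, but only as long as $d$ stays small enough that the factor $d^{m_{j^\ast}-1}$ does not overwhelm $(x/d)^{M(\vh)}$; this restriction, together with the final optimization, is what produces the second alternative $(m_k^2-m_k)^{-1}$ in $\gamma$.

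It then remains to sum over $\vh$ and over $d$. Summing the Weyl bounds weighted by $1/r(\vh)$ over $0<\|\vh\|_\infty\le H$ costs only a factor $(\log H)^k$, and $H$ is chosen as a small power of $x/d$ to balance the $x/(dH)$ term. For the $d$-sum I would split at a threshold $D$: for $d\le D$ use the estimates above, while for $d>D$ I would swap the order of summation back, noting that $\sum_{d>D}\mu(d)S_d(x)$ only detects those $n$ whose relevant gcd exceeds $D$, and such $n$ are heavily constrained — when $k=1$ the finite-type bound shows there is simply no $\ell\le x/d$ with $\{\alpha_1\ell\}$ in an interval of length $1/d$ once $d\gg_\varepsilon x^{\tau/(\tau+1)+\varepsilon}$, and when $k\ge2$ one uses two of the $k$ fractional-part conditions simultaneously. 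Optimizing $D$ against the small-$d$ error and the tail $\sum_{d>D}d^{-(k+1)}$ of the main-term series delivers the stated exponent $\gamma$, with the factor $\tfrac18$ and the $\min$ coming out of this balancing together with the degree restriction from the Weyl step.

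The hard part will be the Weyl estimates for $k\ge2$: producing a bound for $\sum_{\ell\le x/d}e\bigl(\sum_jh_j\alpha_jd^{m_j-1}\ell^{m_j}\bigr)$ that is uniform in $\vh$, strong enough to survive both the sum over $\vh$ and the sum over $d$ up to the threshold, and that correctly tracks the potentially large factor $d^{m_{j^\ast}-1}$ in the leading coefficient, which erodes the Diophantine input coming from the finite type of $\alpha_{j^\ast}$. Getting the exact dependence on $m_k$, $\tau$ and $d$ there, and then coordinating the two thresholds $H$ and $D$ to extract a clean power saving, is the technical heart of the argument; for $k=1$ the same scheme goes through with only classical linear exponential sums, so there the only real work is the bookkeeping around the split at $D$.
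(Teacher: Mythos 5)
Your overall architecture---M\"obius inversion, reduction to a discrepancy estimate for the points $\bigl(\{\alpha_j d^{m_j-1}\ell^{m_j}\}\bigr)_{j\le k}$ via Erd\H os--Tur\'an--Koksma, Weyl sums whose Diophantine input comes from the finite type of the $\alpha_j$, and a split of the $d$-sum at a threshold $D$---is exactly the paper's. But two of your concrete choices would not deliver the stated exponent $\gamma$. First, for $k\ge 2$ you propose to bound the Weyl sums by ``Weyl's inequality (equivalently, repeated van der Corput differencing).'' That tool saves only $\Delta^{2^{1-m}}$ over the trivial bound, where $m\le m_k$ is the degree of the relevant sum, so the exponent it feeds into the final optimization is $2^{1-m_k}$, not $(m_k^2-m_k)^{-1}$; for $m_k\ge 6$ this is strictly weaker and cannot produce the claimed $\gamma=\tfrac18\min\{(m_k\tau)^{-1},(m_k^2-m_k)^{-1}\}$. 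The paper instead invokes a Vinogradov-strength estimate (Lemma~\ref{lem:Weyl-h}, due to Shparlinski and Thuswaldner, ultimately resting on the Vinogradov mean value theorem), whose saving is $\Delta^{1/(m^2-m)}$ and which also cleanly tracks the factor $d^{m_j-1}$ in the leading coefficient that you rightly identify as the danger. Your way of importing the Diophantine hypothesis (approximating $\alpha_{j^\ast}$ and lower-bounding the Dirichlet denominator via the type) is fine and is equivalent to the paper's Lemma~\ref{lem:Dioph-one}.

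Second, the tail $d>D$ is underspecified, and the version you do describe is too weak. For $k=1$ your plan is: emptiness of $S_d(x)$ for $d\gg x^{\tau/(\tau+1)+\varepsilon}$ (correct, since $\{\alpha_1\ell\}<1/d$ forces $\nearint{\alpha_1\ell}<1/d$ and hence $\ell\gg d^{1/(\tau+\varepsilon)}$), combined with $S_d(x)=x/d^2+O\bigl((x/d)^{\tau/(\tau+1)+\varepsilon}\bigr)$ below that threshold. Summing the error term over $d\le x^{\tau/(\tau+1)}$ gives $x^{\tau(\tau+2)/(\tau+1)^2+\varepsilon}$, which exceeds the claimed $x^{1-1/(3\tau+2)}$ as soon as $\tau^2-\tau-1>0$, i.e.\ for every $\tau>(1+\sqrt 5)/2$. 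To reach the stated $\gamma$ you need a genuine upper bound for $S_d(x)$ in the intermediate range that permits a much smaller $D$: the paper uses Watson's argument, in which a rational approximation $a/q$ to $\alpha_1$ with $Q^\varpi<q\le Q$ confines the admissible $n$ to $O\bigl(q/d+x/(dQ)\bigr)$ residue classes modulo $q$, yielding $T(x,d)\ll x/d^2+x^{c_1}d^{-1}+x^{2c_2}d^{-1-c_2}$ and hence a tail contribution $xD^{-1}+x^{c_1}\log x+x^{2c_2}D^{-c_2}$ with $D$ a small power of $x$. The same device is what handles the tail for $k\ge 2$ (only the linear condition $j=1$ is used there, which is where the hypothesis $m_1=1$ enters); ``using two of the fractional-part conditions simultaneously,'' as you suggest, would drag a degree-$m_2$ Weyl sum into the tail range where you have no control over $d$.
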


A proof of Theorem~\ref{thm:finite-type} is given in~\S\ref{sec:proof-thm1}.
As alluded to above, for almost all vectors $\balpha$
(in the sense of Lebesgue measure) one can use $\tau\defeq 1$
in applications of Theorem~\ref{thm:finite-type}; thus, for such
vectors one can take 
$$\gamma\defeq \tfrac{1}{8}(m_k^2-m_k)^{-1}
$$ 
in all these cases.

We note that although we have optimized the general shape of the dependence 
on $m_k$ and $\tau$, the constant $\frac18$ in the above is certainly not optimal 
and, with a bit of tedious work, can be improved. 
 
Our next  result tests the limits of our approach
as we consider abnormally well-approximable vectors $\balpha$.
More precisely, in some cases in which the terms of the sequence
$\balpha$ as in~\eqref{eq:alpha}  are all irrational and of finite
\emph{exponential type}, we still manage to establish the expected
asymptotic relation, albeit with a weaker error term.

\begin{theorem}\label{thm:fin-exp-type}
Let $\balpha$  as in~\eqref{eq:alpha}  be  
such that every $\alpha_j$ is an irrational number of finite
exponential type not exceeding $\tau_\star$. Then
the estimate 
$$
\Nab(x)=\frac{x}{\zeta(k+1)}+O\(x^{1-\gamma_\star+o(1)}\)
\qquad(x\to\infty),
$$
holds with
$$
\gamma_\star\defeq\begin{cases}
\min\left\{\tau_\star^{-1},\tfrac12(\tau_\star^{-1}+1)\right\}&\quad\hbox{if $k=1$},\\
\dfrac{1-(m_k^2-m_k+1)\tau_\star}{(m_k^2+2)\tau_\star}
&\quad\hbox{if $k\ge 2$}.
\end{cases}
$$
\end{theorem}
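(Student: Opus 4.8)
The plan is to follow the proof of Theorem~\ref{thm:finite-type} from~\S\ref{sec:proof-thm1} closely in its overall shape, feeding in the exponential-type Diophantine bounds where the finite-type bounds were used and then re-optimising every free parameter; the only real work is checking that a genuine power saving survives. Concretely, detecting~\eqref{eq:gcd-condn} with $\sum_{d\mid\gcd}\mu(d)$ and writing $n=d\ell$ gives
\[
\Nab(x)=\sum_{d\le x}\mu(d)\,V_d(x/d),\qquad
V_d(y)\defeq\#\bigl\{\ell\le y:\ d\mid\flo{\alpha_j(d\ell)^{m_j}}\ (1\le j\le k)\bigr\}.
\]
Because $d\mid\flo{t}\iff\{t/d\}\in[0,1/d)$ and $\alpha_j(d\ell)^{m_j}/d=\alpha_j d^{m_j-1}\ell^{m_j}$ (which is just $\alpha_1\ell$ when $j=1$), the quantity $V_d(y)$ counts those $\ell\le y$ for which $\{\alpha_j d^{m_j-1}\ell^{m_j}\}<1/d$ holds for every $j$ at once. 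I would split the $d$-sum at a threshold $d\le D$ versus $d>D$, with $D$ a small power of $x$ to be chosen.

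For $d\le D$ I would sandwich each indicator $\mathbf 1[\{t\}<1/d]$ between Vaaler/Beurling--Selberg trigonometric polynomials of degree $\le H$. The zero-frequency term produces $\sum_{d\le D}\mu(d)(x/d)\,d^{-k}$, which after completing the $d$-summation contributes $x/\zeta(k+1)+O(x^{1-\eta})$; the remaining terms are governed by the exponential sums
\[
W_j(h,d,y)\defeq\sum_{\ell\le y}e\bigl(h\,\alpha_j d^{m_j-1}\,\ell^{m_j}\bigr),\qquad 1\le h\le H.
\]
When $k=1$ only the geometric sum $W_1(h,d,y)=\sum_{\ell\le y}e(h\alpha_1\ell)\ll\min(y,\nearint{h\alpha_1}^{-1})$ occurs; when $k\ge2$ the sums with $m_j\ge2$ are genuine Weyl sums of degree $m_j\le m_k$, and I would estimate them by Weyl's inequality (equivalently, Weyl differencing / the van der Corput process), which reduces matters to a Dirichlet approximation $a/r$ of the leading coefficient $h\alpha_j d^{m_j-1}$ and yields $W_j\ll y^{1+o(1)}\bigl(r^{-1}+y^{-1}+ry^{-m_j}\bigr)^{2^{1-m_j}}$.

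The hypothesis enters precisely here. Finite exponential type $\le\tau_\star$ means that for every $\varepsilon>0$ and all large integers $q$ one has $\nearint{q\alpha_j}\ge\exp(-q^{\tau_\star+\varepsilon})$, i.e.\ $\nearint{q\alpha_j}^{-1}\le\exp(q^{\tau_\star+o(1)})$. The multipliers occurring above are $q=h d^{m_j-1}\le HD^{m_k-1}$, so to keep $\nearint{q\alpha_j}^{-1}$—and the denominators $r$ feeding Weyl's inequality—of size $x^{o(1)}$ one is forced into the range roughly governed by $(HD^{m_k-1})^{\tau_\star}\ll\log x$, far narrower than the polynomial-sized ranges available under finite type; this is the mechanism that weakens the error term. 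For the tail $d>D$ I would discard all but one (or two) of the $k$ congruences, bound $V_d(x/d)$ via $\#\{\ell\le x/d:\{\alpha_1\ell\}<1/d\}$, switch the order of the $d$- and $\ell$-summations, and invoke the discrepancy estimate for $(\alpha_1\ell)$ that follows from the same exponential-type bound. Balancing the completion error $x^{1-\eta}$, the truncation error $\ll x^{1+o(1)}/H$, the tail, and the Weyl losses against the admissible sizes of $H$ and $D$, and then choosing $\eta$ optimally, should reproduce $\gamma_\star$: for $k=1$ the two entries of the $\min$ reflect two complementary estimates for the relevant geometric sums, while for $k\ge2$ the estimate is nontrivial only in the regime $\tau_\star<(m_k^2-m_k+1)^{-1}$, i.e.\ for only mildly Liouville leading coefficients.

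The main obstacle is the last part of the second and third steps when $k\ge2$: extracting a \emph{power} saving—rather than a mere $(\log x)^{c}$ saving—from the Weyl sums $W_j(h,d,y)$ \emph{uniformly} over $h\le H$ and $d\le D$ when $\alpha_j$ may be abnormally well approximable. A single pair $(h,d)$ for which $h\alpha_j d^{m_j-1}$ lies extraordinarily close to a rational with small denominator would wreck the cancellation; the finite-exponential-type bound is exactly what forbids this, but it is so weak that it only barely controls such pairs, and it is this tension that dictates both the admissible range of $\tau_\star$ and the shape of $\gamma_\star$.
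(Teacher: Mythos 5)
Your overall architecture --- M\"obius inversion, splitting at $d\le D$, a completion/discrepancy argument for small $d$ (Vaaler polynomials in place of the paper's Koksma--Sz\"usz inequality, which is immaterial), and a Watson-style reciprocal-sum treatment of the tail after switching the order of summation --- matches the paper's, and your diagnosis of where the exponential-type hypothesis enters is accurate. The gap is in the one step you yourself flag as the ``main obstacle,'' and it is fatal as written. For $m_j\ge 2$ you propose classical Weyl's inequality, $W_j\ll y^{1+o(1)}\bigl(r^{-1}+y^{-1}+ry^{-m_j}\bigr)^{2^{1-m_j}}$. Under the finite exponential type hypothesis, Lemma~\ref{lem:Dioph-two} only guarantees approximating denominators $q>(\log Q)^{\varpi+1}$, so the parenthetical quantity is at best a negative power of $\log x$; the factor $y^{o(1)}$ (in Weyl differencing this is a divisor-function loss of size $\exp(O(\log y/\log\log y))$) then swamps any power of $\log y$ and the bound becomes trivial. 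This is exactly the point of the remark following Lemma~\ref{lem:Weyl-h}: the paper must use the bound~\eqref{eq:Bound log}, with the explicit factor $N\log N$ rather than $N^{1+o(1)}$, at the price of the weaker exponent $1/(m^2-m+2)$. That exponent is also what produces the constants $m_k^2-m_k+2$ and $m_k^2+2$ appearing in $\gamma_\star$ and the threshold $\tau_\star<(m_k^2-m_k+1)^{-1}$; the Weyl exponent $2^{1-m_j}$ cannot reproduce the stated formula even if the $y^{o(1)}$ issue were repaired.

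Relatedly, your plan to balance a ``completion error $x^{1-\eta}$'' against the other terms cannot be carried out: since (as you correctly observe) $H$ and $D$ are forced down to powers of $\log x$, every error term in this argument is of the shape $x(\log x)^{-c}$, and in the paper's proof $\gamma_\star$ arises as an exponent of $\log x$, not of $x$ (compare~\eqref{eq:E0 def TWO}, \eqref{eq:E1 def TWO} and~\eqref{eq:E2 def TWO}, and the final optimizations with $D$ a power of $\log x$). So no power saving is extracted by this method, and the obstacle you name is not overcome in the paper either; the argument should be organized from the outset around logarithmic savings. Two smaller points: for the tail $d>D$ a bare discrepancy bound for $(\alpha_1\ell)$ is insufficient because the window $1/d$ is narrower than the discrepancy --- one needs the arithmetic structure (few residue classes modulo $q$, as in Watson's argument) or, equivalently, the reciprocal sums $\sum_\ell\min(x/\ell,\nearint{\alpha_1\ell}^{-1})$; and for $k=1$ the two entries of the $\min$ defining $\gamma_\star$ come from balancing the tail contribution $x(\log x)^{-\varpi}$ against the small-$d$ discrepancy contribution $x(\log x)^{-(\varpi+1)/2+o(1)}$, not from two complementary estimates for the same geometric sum.
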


A proof of Theorem~\ref{thm:fin-exp-type} is given  in~\S\ref{sec:proof-thm2}.

\begin{remark}Examining our proofs, one can immediately notice that 
without changing anything in the statements of Theorems~\ref{thm:finite-type} and \ref{thm:fin-exp-type},
one can replace $\alpha_j n^{m_j}$, $ j =2, \ldots, k$,
in~\eqref{eq:caesar} with   $\alpha_j n^{m_j} + g_j(n)$ where $g_j \in \R[X]$, 
$\deg g_j < m_j$ (however we still have to keep $\alpha_1 n$ in~\eqref{eq:caesar}). 
\end{remark}

\smallskip\section{Preliminaries}

\subsection{Denominators of Diophantine approximations}
The following simple result gives bounds on the denominators 
of certain rational approximations to an irrational number of finite type.

\begin{lemma}\label{lem:Dioph-one}
Suppose that $\alpha\in\RQ$ has finite type $\tau$,
and that $\varpi\in(0,\tau^{-1})$. If $Q$ is large enough
$($depending on $\alpha$ and $\varpi)$,
then there are integers $a$ and $q$ such that
\be\label{eq:Diophantine1}
\biggl|\alpha-\frac{a}{q}\biggr|<\frac{1}{qQ},
\qquad\gcd(a,q)=1,\qquad Q^\varpi<q\le Q.
\ee
\end{lemma}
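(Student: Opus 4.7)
The plan is to combine Dirichlet's approximation theorem (which furnishes a rational approximation with denominator at most $Q$) with the finite-type hypothesis (which prevents the denominator from being too small).

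First, I would apply Dirichlet's theorem to produce integers $a_0,q_0$ with $1\le q_0\le Q$ and $|q_0\alpha-a_0|<1/Q$. Reducing to lowest terms, let $d=\gcd(a_0,q_0)$, $a=a_0/d$, $q=q_0/d$; then $\gcd(a,q)=1$, $1\le q\le Q$, and
$$
\biggl|\alpha-\frac{a}{q}\biggr|
=\frac{1}{d}\biggl|\alpha-\frac{a_0}{q_0}\biggr|
<\frac{1}{dq_0Q}=\frac{1}{d^2 qQ}\le\frac{1}{qQ},
$$
which secures the first two conditions in~\eqref{eq:Diophantine1}.

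The substance of the lemma is the lower bound $q>Q^\varpi$. Since $\varpi<\tau^{-1}$, I fix an auxiliary exponent $\vartheta$ with $\tau<\vartheta<\varpi^{-1}$. Because $\vartheta>\tau$, the very definition of type gives $\liminf_{q\in\N}q^\vartheta\nearint{q\alpha}>0$, and as $\alpha\in\RQ$ the quantity $q^\vartheta\nearint{q\alpha}$ is strictly positive for every $q\in\N$; hence there is a constant $c=c(\alpha,\vartheta)>0$ with
$$
\nearint{q\alpha}\ge c\,q^{-\vartheta}\qquad\text{for all }q\in\N.
$$

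Now suppose, for contradiction, that $q\le Q^\varpi$. Then, using that $|q\alpha-a|\le|q_0\alpha-a_0|<1/Q$,
$$
c\,q^{-\vartheta}\le\nearint{q\alpha}\le|q\alpha-a|<\frac{1}{Q},
$$
whence $q>(cQ)^{1/\vartheta}$. Combined with the hypothesis $q\le Q^\varpi$ this forces $(cQ)^{1/\vartheta}<Q^\varpi$, i.e.\ $c<Q^{\varpi\vartheta-1}$. Since $\varpi\vartheta<1$, the right-hand side tends to $0$ as $Q\to\infty$, so this inequality fails for all sufficiently large $Q$ (depending only on $\alpha$ and $\varpi$, through $\vartheta$ and $c$), giving the desired contradiction.

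The lemma is a bookkeeping exercise once the ingredients are set up; the one point that needs a little care is choosing the intermediate exponent $\vartheta$ strictly between $\tau$ and $\varpi^{-1}$ so that both the type bound on $\nearint{q\alpha}$ and the final inequality $\varpi\vartheta<1$ are available simultaneously, which is exactly what the strict inequality $\varpi<\tau^{-1}$ affords.
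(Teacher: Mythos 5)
Your proof is correct and follows essentially the same route as the paper's: Dirichlet's theorem supplies the approximation with $q\le Q$, and the finite-type hypothesis (via an intermediate exponent $\vartheta\in(\tau,\varpi^{-1})$ yielding $\nearint{q\alpha}\ge c\,q^{-\vartheta}$) rules out $q\le Q^\varpi$ once $Q$ is large; your write-up is in fact slightly more careful than the paper's, which asserts the lower bound on $\nearint{q\alpha}$ only for large $q$. One cosmetic slip: after reducing to lowest terms one has $|\alpha-a/q|=|\alpha-a_0/q_0|$ exactly (the factor $1/d$ belongs to $|q\alpha-a|=\tfrac1d|q_0\alpha-a_0|$), but since $1/d\le 1$ this does not affect the chain of inequalities or the conclusion.
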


\begin{proof}
By Dirichlet's approximation theorem,
there are coprime integers $a$ and $q\le Q$ such that the
first inequality of~\eqref{eq:Diophantine1} holds. Then
$$
\nearint{\alpha q}\le |\alpha q-a|<Q^{-1}.
$$
On the other hand, since $\alpha$ is of type $\tau<\varpi^{-1}$,
we have
$$
q^{1/\varpi}\nearint{\alpha q}\ge 1
$$
if $q$ is large enough. Combining these inequalities,
the lemma follows.
\end{proof}

We also use a similar result for irrational numbers of finite
exponential type; the proof is nearly identical to
that of Lemma~\ref{lem:Dioph-one}.

\begin{lemma}\label{lem:Dioph-two}
Suppose that $\alpha\in\RQ$ has finite exponential
type $\tau_\star$, and that $\varpi\in(0,\tau_\star^{-1}-1)$.
If $Q$ is sufficiently large $($depending on $\alpha$ and $\varpi)$,
then there are integers $a$ and~$q$ such that
$$
\biggl|\alpha-\frac{a}{q}\biggr|<\frac{1}{qQ},
\qquad\gcd(a,q)=1,\qquad (\log Q)^{\varpi+1}<q\le Q.
$$ 
\end{lemma}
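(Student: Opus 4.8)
The plan is to mimic the proof of Lemma~\ref{lem:Dioph-one}, replacing the polynomial lower bound on $q^{1/\varpi}\nearint{\alpha q}$ by an exponential one coming from the definition of finite exponential type. First I would invoke Dirichlet's approximation theorem in exactly the same way: for $Q$ large there exist coprime integers $a,q$ with $1\le q\le Q$ and $|\alpha-a/q|<1/(qQ)$, so that in particular $\nearint{\alpha q}\le|\alpha q-a|<Q^{-1}$. The only thing left is to rule out the possibility that $q$ is too small, i.e. that $q\le(\log Q)^{\varpi+1}$.

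For this I would use that $\alpha$ has exponential type $\tau_\star<(\varpi+1)^{-1}$. Pick $\vartheta$ with $\tau_\star<\vartheta<(\varpi+1)^{-1}$; by the definition of $\tau_\star$ as a supremum, $\vartheta$ is not in the defining set, so it is \emph{not} the case that $\exp(q^\vartheta)\nearint{\alpha q}\to 0$; in fact one gets a lower bound $\exp(q^\vartheta)\nearint{\alpha q}\ge 1$ for all sufficiently large $q$ — or rather, more carefully, that $\liminf_q \exp(q^\vartheta)\nearint{\alpha q}>0$, which after absorbing a constant and enlarging the threshold gives $\nearint{\alpha q}\ge \exp(-q^\vartheta)$ once $q$ exceeds some $q_0(\alpha,\varpi)$. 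Now suppose toward a contradiction that $q\le(\log Q)^{\varpi+1}$. Combining with $\nearint{\alpha q}<Q^{-1}=\exp(-\log Q)$ yields $\exp(-q^\vartheta)\le\nearint{\alpha q}<\exp(-\log Q)$, hence $q^\vartheta>\log Q$, i.e. $q>(\log Q)^{1/\vartheta}$. But $1/\vartheta>\varpi+1$, so for $Q$ large this contradicts $q\le(\log Q)^{\varpi+1}$. Therefore $q>(\log Q)^{\varpi+1}$, and together with $q\le Q$ and the displayed inequality this is exactly the claimed conclusion. One should also note $q\le Q$ forces $Q$ large enough that the small-$q$ values $q\le q_0$ are automatically excluded by $q>(\log Q)^{\varpi+1}$, so no separate handling of those is needed.

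The only mild subtlety — and the one place I would be slightly careful — is the logical unpacking of the supremum in the definition of $\tau_\star$: since $\vartheta>\tau_\star$ merely means $\vartheta$ does not lie in the set $\{\vartheta':\liminf_q\exp(q^{\vartheta'})\nearint{\alpha q}=0\}$, strictly speaking one only gets $\liminf_q\exp(q^\vartheta)\nearint{\alpha q}=c>0$, not $=1$. That is harmless: replacing $\vartheta$ by a slightly smaller $\vartheta'$ still $>\tau_\star$, or simply absorbing the constant $c$ into the exponential for large $q$ (since $c\exp(q^\vartheta)\ge\exp((q/2)^\vartheta)$ eventually, say), restores a clean bound of the shape $\nearint{\alpha q}\ge\exp(-q^{\vartheta''})$ for a suitable $\vartheta''$ still strictly below $(\varpi+1)^{-1}$, and the argument of the previous paragraph goes through verbatim. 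Apart from that bookkeeping, there is no real obstacle — this is genuinely the exponential-type analogue of Lemma~\ref{lem:Dioph-one}, and the proof is as short.
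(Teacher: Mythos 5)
Your proof is correct and follows exactly the route the paper intends: the paper omits the proof, stating only that it is ``nearly identical'' to that of Lemma~\ref{lem:Dioph-one}, and your argument is precisely that adaptation (Dirichlet's theorem plus the lower bound $\nearint{\alpha q}\ge\exp(-q^{\vartheta})$ for $\vartheta$ strictly between $\tau_\star$ and $(\varpi+1)^{-1}$). Your closing remarks on unpacking the supremum and absorbing the $\liminf$ constant are exactly the right bookkeeping and if anything are more careful than the paper's own treatment of Lemma~\ref{lem:Dioph-one}.
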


\subsection{Discrepancy and the Koksma-Sz\"usz inequality}
\label{sec:discrepancy}

Let us consider the collection $\cS$ consisting of all subsets
$S$ of $\Omega\defeq [0,1)^k$ of the form
$$
S=\bigotimes_{1\le j\le k}[a_j,b_j) 
$$
with $0\le a_j<b_j\le 1$ for each $j$. For any given sequence
$\bm v\defeq (\bm v_n)_{n\ge 1}$ of vectors $\bm v_n\in\Omega$
and a fixed set $S\in\cS$, we denote
$$
A(\bm v,S;N)\defeq \bigl|\{n\le N:~\bm v_n\in S\}\bigr|.
$$
The (extreme) discrepancy is the quantity defined by
$$
\sD(\bm v;N)\defeq \sup\limits_{S\in\cS}
\left|\frac{A(\bm v,S;N)}{N}-m(S)\right|
\qquad\text{with}\quad
m(S)\defeq \prod_{1\le j\le k}(b_j-a_j).
$$
Note that if the vectors $\bm v_n$ in $\bm v$ are chosen uniformly 
at random from $\Omega$ and independently for each $n$,
then $m(S)$ (the Lebesgue measure of the
subset $S\subset\Omega$) represents the proportion of the vectors
$\bm v_n$  expected to lie in $S$.

One of the basic tools used to study uniformity of
distribution is the celebrated
\emph{Koksma--Sz\"usz inequality\/}~\cite{Kok,Sz} 
(see also Drmota and Tichy~\cite[Theorem~1.21]{DrTi}),
which links the discrepancy of a sequence of points to certain
exponential sums.
To formulate the result, let us recall the standard notation. 
$$
\e(t)\defeq \exp(2\pi it)\qquad(t\in\R).
$$ 
Also, identifying each real sequence $\bm{v}\defeq (v_j)_{j=1}^k$ with
the vector $(v_1,\ldots,v_k)\in\R^k$, we denote the inner
 product of two such sequences $\bm{v}$ and $\bm{w}$ by
$$
\langle\bm{v},\bm{w}\rangle\defeq \sum_{j=1}^kv_jw_j.
$$
The Koksma--Sz\"usz inequality can be stated as follows.

\begin{lemma}\label{lem:K-S}
There is an absolute constant $C>0$ with the following property.
For any integer $H > 1$ and any sequence 
of points $\bm v\defeq (\bm v_n)_{n\ge 1}$ in $\Omega$, we have
$$
\sD(\bm v;N)\le C^k\(\frac{1}{H}
+\frac1N\sum_{0<\|\vh\|\le H}\frac{1}{r(\vh)}
\left|\sum_{n=1}^N\e(\langle\vh,\bm v_n\rangle)\right|\),
$$
where 
$$
\|\vh\|\defeq \max_j|h_j|, \qquad
r(\vh) \defeq \prod_{j=1}^k \max\(|h_j|,1\),
$$
and the sum is taken over all vectors
$\vh= (h_1,\ldots,h_k)\in\Z^k$ with $0<\|\vh\|\le H$.
\end{lemma}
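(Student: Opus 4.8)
The plan is to prove a one-sided (majorant) estimate valid for an arbitrary box and then to recover the full inequality by complementation. Writing
$$
E\defeq\frac1H+\sum_{0<\|\vh\|\le H}\frac1{r(\vh)}\left|\frac1N\sum_{n=1}^N\e(\langle\vh,\bm v_n\rangle)\right|,
$$
I will first show that for every box $T=\bigotimes_{1\le j\le k}[a_j,b_j)$ in $\cS$ one has $\tfrac{A(\bm v,T;N)}N-m(T)\le 2^kE$. Granting this, fix $S=\bigotimes_j[a_j,b_j)\in\cS$ and note that $\Omega\setminus S$ is a disjoint union of $L\le 2k$ boxes $T_1,\dots,T_L$: for each $j=1,\dots,k$ peel off the region on which the first $j-1$ coordinates lie in the corresponding $[a_i,b_i)$ while the $j$-th coordinate lies in $[0,a_j)$ or in $[b_j,1)$ — each such region is a union of at most two boxes, and every point of $\Omega\setminus S$ lies in exactly one of them (take the least $j$ with $x_j\notin[a_j,b_j)$). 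Then $m(S)=1-\sum_i m(T_i)$ and $A(\bm v,S;N)=N-\sum_i A(\bm v,T_i;N)$, so the majorant bound applied to each $T_i$ yields $\tfrac{A(\bm v,S;N)}N-m(S)\ge-2k\cdot 2^kE$; combined with $\tfrac{A(\bm v,S;N)}N-m(S)\le 2^kE$ (the majorant bound for $T=S$) and with $2k\cdot 2^k\le C^k$ for a suitable absolute constant $C>0$, taking the supremum over $S\in\cS$ gives $\sD(\bm v;N)\le C^kE$, which is the assertion.

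The engine behind the majorant bound is the classical one-dimensional extremal construction of Vaaler (equivalently, the Beurling--Selberg majorant): for any interval $[a,b)\subseteq[0,1)$ and any integer $H\ge1$ there is a trigonometric polynomial $g=g_{[a,b),H}$, of degree at most $H$, with $g(x)\ge\mathbf 1_{[a,b)}(x)$ for all $x$, with $\widehat g(0)=(b-a)+\tfrac1{H+1}$, with $\widehat g(h)=0$ for $|h|>H$, and with $|\widehat g(h)|\le\tfrac1{H+1}+\tfrac1{\pi|h|}$ for $1\le|h|\le H$; in particular $|\widehat g(h)|\le 2/\max(|h|,1)$ for every $h\in\Z$. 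I will quote this as a black box — it is the only non-elementary input.

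Given $T$ as above, set $B_T(\bm x)\defeq\prod_{j=1}^k g_{[a_j,b_j),H}(x_j)$. Each factor is nonnegative, so $B_T\ge\mathbf 1_T$ pointwise, and the Fourier coefficients factor: $\widehat{B_T}(\vh)=\prod_{j=1}^k\widehat{g_{[a_j,b_j),H}}(h_j)$, supported on $\|\vh\|\le H$. Hence $|\widehat{B_T}(\vh)|\le\prod_j 2/\max(|h_j|,1)=2^k/r(\vh)$, while expanding the product and using $b_j-a_j\le1$ and $\tfrac1{H+1}\le1$ in the cross terms gives $\widehat{B_T}(\bm 0)=\prod_j\bigl((b_j-a_j)+\tfrac1{H+1}\bigr)\le m(T)+\tfrac{2^k}{H+1}$. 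Now expand $\sum_{n\le N}B_T(\bm v_n)$ in its finite Fourier series:
$$
A(\bm v,T;N)\ \le\ \sum_{n=1}^N B_T(\bm v_n)\ =\ \sum_{\|\vh\|\le H}\widehat{B_T}(\vh)\sum_{n=1}^N\e(\langle\vh,\bm v_n\rangle).
$$
Isolating the $\vh=\bm 0$ term and using $\widehat{B_T}(\bm 0)\le m(T)+\tfrac{2^k}{H+1}<m(T)+\tfrac{2^k}{H}$ together with $|\widehat{B_T}(\vh)|\le 2^k/r(\vh)$ gives $\tfrac{A(\bm v,T;N)}N-m(T)\le 2^kE$, as needed.

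The one step I expect to be delicate is the black-boxed one-dimensional construction: replacing Vaaler's polynomial by a cruder elementary majorant would destroy the product structure of the Fourier coefficients and hence the sharp $1/r(\vh)$ weighting demanded by the statement. Everything else — the decomposition of a box-complement into $\le 2k$ boxes, the estimate $\widehat{B_T}(\bm 0)\le m(T)+O_k(1/H)$, and absorbing $2k\cdot 2^k$ into $C^k$ — is routine.
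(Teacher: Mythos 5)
Your argument is correct. Note, though, that the paper does not prove Lemma~\ref{lem:K-S} at all: it is quoted as a known result (the Koksma--Sz\"usz, or Erd\H os--Tur\'an--Koksma, inequality) with references to Koksma, Sz\"usz and to Drmota--Tichy, Theorem~1.21. What you have written is essentially the standard modern proof of that cited theorem: a product of one-dimensional Beurling--Selberg/Vaaler majorants gives a trigonometric polynomial $B_T\ge\mathbf 1_T$ of degree $\le H$ in each variable with $\widehat{B_T}(\bm 0)\le m(T)+2^k/(H+1)$ and $|\widehat{B_T}(\vh)|\le 2^k/r(\vh)$, which yields the one-sided bound, and the two-sided bound follows by your complementation of $\Omega\setminus S$ into at most $2k$ boxes (one could equally use the Vaaler minorants directly, which is what the textbook proofs do). All the individual steps check out: the peeling decomposition is a genuine partition, $2k\cdot 2^k\le 4^k$ absorbs into $C^k$, the nonnegativity of each factor justifies $B_T\ge\mathbf 1_T$, and the coefficient bounds $|\widehat g(h)|\le 1/(H+1)+1/(\pi|h|)\le 2/|h|$ for $0<|h|\le H$ are the correct consequences of Vaaler's theorem. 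Your only non-elementary input, the one-dimensional extremal majorant, is exactly the black box the cited sources also rely on, so your write-up is a legitimate self-contained substitute for the citation.
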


\subsection{Bounds on Weyl sums}
\label{sec:bounding-Weyl-sums}

We use the following result of Shparlinski and
Thuswaldner~\cite[Lemma~3.2]{ShTh}. 

\begin{lemma}\label{lem:Weyl-h}
Let $m \ge 2$ be a fixed integer. 
Suppose that $\alpha\in[0,1)$ satisfies
$$
\left|\alpha-\frac{a}{q}\right|<\frac{1}{q^2}
$$
with some coprime integers $a$ and $q\ge 1$. 
Then, for any integer $h \ne 0$, any polynomial $g(X) \in \R[X]$ of degree 
at most $m-1$, and any integer $N$, we have 
\be
\label{eq:Bound little o} 
\sum_{n=1}^N \e(h\alpha n^m+g(n))
\ll N^{1+o(1)} \Delta^{1/(m^2-m)}
\qquad(N\to\infty), 
\ee
and 
\be
\label{eq:Bound log} 
\sum_{n=1}^N \e(h\alpha n^m+g(n))\ll N(\log N)\Delta^{1/(m^2-m +2)}, 
\ee
where
$$
\Delta\defeq q^{-1}|h|+N^{-1}+qN^{-m}+\gcd(q,h)N^{-m+1}.
$$ 
\end{lemma}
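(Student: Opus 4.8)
The natural first move is Weyl's method of differencing. Write $S=\sum_{n=1}^{N}\e(h\alpha n^{m}+g(n))$; since $\deg g\le m-1$, the phase is a polynomial in $n$ of degree exactly $m$ with leading coefficient $h\alpha$. Applying the van der Corput $A$-process $m-1$ times---each step lowering the degree by one and multiplying the leading coefficient by (the current degree)$\times j_{i}$ for a shift $j_{i}$---one is left, for every tuple $(j_{1},\dots,j_{m-1})$ with $0<|j_{i}|<N$, with a linear phase in $n$ of slope $m!\,h\alpha\,j_{1}\cdots j_{m-1}$ (the polynomial $g$, of degree $\le m-1$, collapsing to a constant after $m-1$ differences). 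The elementary bound $\bigl|\sum_{n\in I}\e(\theta n+c)\bigr|\ll\min(N,\nearint{\theta}^{-1})$ for an interval $I$ of length at most $N$ then gives
\[
|S|^{2^{m-1}}\ll N^{2^{m-1}-m}\sum_{0<|j_{1}|,\dots,|j_{m-1}|<N}\min\!\bigl(N,\nearint{m!\,h\alpha\,j_{1}\cdots j_{m-1}}^{-1}\bigr)+N^{2^{m-1}-1},
\]
the last summand absorbing the $O(N^{m-2})$ degenerate tuples with some $j_{i}=0$. Collecting the remaining tuples according to the value of the product $\ell=j_{1}\cdots j_{m-1}$ and bounding the number of ordered factorizations of $|\ell|$ by the divisor estimate $N^{o(1)}$ reduces everything to the one-dimensional sum $\sum_{0<|\ell|\le N^{m-1}}\min(N,\nearint{m!\,h\alpha\,\ell}^{-1})$.

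To estimate the latter using only the hypothesis $|\alpha-a/q|<q^{-2}$ with $\gcd(a,q)=1$, one observes that $\nearint{m!\,h\alpha\,\ell}$ can be small only when $\ell$ is close to a multiple of $q_{0}\defeq q/\gcd(q,m!h)$, since $m!ha\ell\equiv 0\pmod q$ exactly when $q_{0}\mid\ell$. Splitting $[1,N^{m-1}]$ into runs of $q_{0}$ consecutive values of $\ell$, over each of which the points $m!\,h\alpha\,\ell\bmod 1$ form an approximate arithmetic progression of step $q_{0}^{-1}$ with error governed by $|\ell|\,m!|h|\,q^{-2}$, one gets a per-run contribution $\ll N+q_{0}\log q_{0}$ as long as that error stays below $q_{0}^{-1}$, and a trivial contribution otherwise; summing over the $\ll N^{m-1}q_{0}^{-1}+1$ runs and unwinding produces a bound of exactly the shape of the lemma, but with $\Delta$ replaced by a four-term quantity of the same form and the exponent $1/2^{m-1}$ in place of $1/(m^{2}-m)$. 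Since $2^{m-1}\le m^{2}-m$ for $2\le m\le 5$, this already gives---indeed beats---the first estimate of the lemma in that range, and replacing the crude bound $\min(N,\cdot)\le N$ on the near-multiples of $q_{0}$ by an honest summation of the tails yields the $(\log N)$-version in the same way. The cancellation itself is standard; the real work, and what I expect to be the main obstacle, is the bookkeeping---keeping the dependence on $|h|$, $q$ and $\gcd(q,h)$ uniform so that every regime, including the degenerate range and the ``major-arc'' range where one simply uses $|S|\le N$, is absorbed into precisely the four summands of $\Delta$.

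For $m\ge 6$ the exponent $1/2^{m-1}$ produced by plain differencing is weaker than $1/(m^{2}-m)$, and here one should instead invoke the resolution of Vinogradov's mean value theorem (Bourgain--Demeter--Guth, Wooley), which yields directly the Weyl-type bound $\sum_{n\le N}\e(\beta n^{m}+\cdots)\ll N^{1+o(1)}((q')^{-1}+N^{-1}+q'N^{-m})^{1/(m^{2}-m)}$ for a degree-$m$ phase whose leading coefficient $\beta$ satisfies $|\beta-a'/q'|<(q')^{-2}$ with $\gcd(a',q')=1$. The linear factor $h$ is absorbed much as before: writing $ha/q$ in lowest terms as $a'/q'$ gives $q'=q/\gcd(q,h)$ and $|h\alpha-a'/q'|<|h|q^{-2}$, which is at most $(q')^{-2}$ when $|h|\le\gcd(q,h)^{2}$, so the bound applies with $q'$ in that range; in the complementary range one feeds $h\alpha$ through a fresh Dirichlet approximation at level $N^{m}$ and checks that the new denominator is either comparable to $q/\gcd(q,h)$ or large enough that its contribution is dominated by $q^{-1}|h|$. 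Reassembling the pieces gives $\Delta$ (noting $(q')^{-1}=\gcd(q,h)/q\le q^{-1}|h|$ and $q'N^{-m}\le qN^{-m}$, so two of its four summands act mainly as thresholds cutting off the trivial regime). Finally, for the $(\log N)$-version with denominator $m^{2}-m+2$ one applies the mean value theorem at an exponent strictly above the critical value $m(m+1)/2$, where it holds with a fixed power of $\log N$ rather than $N^{o(1)}$; the small loss incurred in passing from the mean-value estimate back to the Weyl-sum estimate is exactly what replaces $m^{2}-m$ by $m^{2}-m+2$.
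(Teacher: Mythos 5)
The paper does not prove this lemma at all: it is imported verbatim from Shparlinski--Thuswaldner \cite[Lemma~3.2]{ShTh}, and the only argument supplied in the text is the observation that the two formulations are equivalent via the substitution $f(X)\defeq\alpha X^m+h^{-1}g(X)$. So you are reconstructing the proof of the cited result rather than paralleling anything in this paper. Your architecture for the case $m\ge 6$ --- and in fact the route actually taken in the literature for all $m\ge 2$ --- is the right one: derive a Weyl-type bound from the resolved Vinogradov mean value theorem, obtaining the exponent $1/(m^2-m)$ with an $N^{o(1)}$ loss at the critical exponent, and the exponent $1/(m^2-m+2)$ with only a power of $\log N$ by working at $2s=m^2-m+2$, strictly above the critical value for the degree-$(m-1)$ system. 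Your explanation of where the $+2$ comes from is correct. The elementary Weyl-differencing detour for $2\le m\le 5$ is a legitimate alternative for \eqref{eq:Bound little o} in that range, since $2^{m-1}\le m^2-m$ there and the bound is trivial when $\Delta>1$.

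Two places where the sketch has real gaps. First, for $3\le m\le 5$ your claim that the differencing route yields \eqref{eq:Bound log} ``in the same way'' does not go through: collecting the tuples $(j_1,\dots,j_{m-1})$ by the value of their product costs a divisor-function factor $N^{o(1)}$, which is incompatible with the clean $N\log N$ in \eqref{eq:Bound log}. You would need either to estimate the nested sum $\sum_{j_1,\dots,j_{m-1}}\min\bigl(N,\nearint{m!\,h\alpha\,j_1\cdots j_{m-1}}^{-1}\bigr)$ iteratively without the divisor trick, or simply to run the VMVT route for the $\log$-version at every $m\ge 2$. Second, and more importantly, the entire point of stating the lemma with $h$ displayed separately (rather than folded into the leading coefficient) is the precise shape of $\Delta$, in particular the term $\gcd(q,h)N^{-m+1}$ and the fact that $q^{-1}$ is replaced by $q^{-1}|h|$ rather than by $\gcd(q,h)/q$ alone. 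Your treatment of this --- the case split on $|h|\le\gcd(q,h)^2$, the ``fresh Dirichlet approximation'' in the complementary range, and the claim that the new denominator is ``either comparable to $q/\gcd(q,h)$ or large enough'' --- is exactly the delicate bookkeeping, and it is asserted rather than carried out. As written, one cannot verify that every regime lands inside the four summands of $\Delta$; since the downstream optimizations in \S\ref{sec:proof-thm1} and \S\ref{sec:proof-thm2} depend on this exact shape (notably on the $\gcd$ term being controllable by $\gcd(q,h)\le Q$), this step needs to be executed, not described.
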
 

At first glance, the statement of Lemma~\ref{lem:Weyl-h} may appear to be
different from~\cite[Lemma~3.2]{ShTh}, which is formulated for Weyl sums
with polynomials of the form $h f(X)$ with $f$ a real polynomial of 
degree $m\ge 2$. However, since the bound given in~\cite{ShTh}
depends only on the leading term of $f$, Lemma~\ref{lem:Weyl-h} is
actually equivalent, for it corresponds to the choice
$f(X)\defeq \alpha X^m+h^{-1}g(X)$. 
We also remark (as in~\cite{ShTh}) that the bound~\eqref{eq:Bound little o}
has a smaller exponent of $\Delta$ than that of~\eqref{eq:Bound log},
hence the first bound is typically stronger. For very small $q$, however,
the factor $N^{o(1)}$ can make~\eqref{eq:Bound little o} trivial, 
whereas~\eqref{eq:Bound log} is nontrivial in the same situation.

For the case $m=2$, we have a more precise statement,
which follows from the Weyl differencing method;
see~\cite[Equation~(3.5)]{Bak-DI}. 

\begin{lemma}\label{lem:QuadrSum-h}
For any integer $h \ne 0$, any linear polynomial $g(X) \in \R[X]$,
and any integer $N$, we have 
$$
\left|\sum_{n=1}^N\e(h\alpha n^2+g(n))\right|^2
\ll\sum_{v=1}^N\min\(N,\frac{1}{\nearint{2hv\alpha}}\), 
$$
where $\nearint{\cdot}$ is defined by~\eqref{eq:near-int}.
\end{lemma}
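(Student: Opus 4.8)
The plan is to apply the standard Weyl differencing (van der Corput) step to the square of the exponential sum and then estimate the resulting inner sum by a routine bound on sums of $\min(N,1/\nearint{\cdot})$. Write $F(n)\defeq h\alpha n^2+g(n)$, so that $F(n)-F(n-v)$ is a linear polynomial in $n$ whose leading coefficient (the coefficient of $n$) equals $2h\alpha v$ plus a quantity depending only on $v$ and the coefficients of $g$. Expanding
$$
\left|\sum_{n=1}^N\e(F(n))\right|^2
=\sum_{n_1,n_2}\e\bigl(F(n_1)-F(n_2)\bigr)
$$
and setting $v=n_1-n_2$, the $v=0$ diagonal contributes $N$, and for each fixed $v$ with $0<|v|<N$ the inner sum over the $O(N)$ admissible values of $n_2$ is a geometric progression of ratio $\e(2h\alpha v)\cdot(\text{phase independent of }n_2)$; pairing $v$ with $-v$ we may take $1\le v\le N$. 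Summing such a geometric progression gives a bound of $\min\bigl(N,\|2h\alpha v\|^{-1}\bigr)$ up to an absolute constant, which yields exactly the claimed inequality after collecting the diagonal term into the sum (note the right-hand side already exceeds $N$ from, say, $v=N$ if $\nearint{2h\alpha N}$ is small, and in any case one can absorb the diagonal contribution at the cost of the implied constant).

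Concretely, I would carry this out in three short steps. First, open the square, substitute $v=n_1-n_2$, and reduce to
$$
\left|\sum_{n=1}^N\e(F(n))\right|^2
\le N+2\sum_{v=1}^{N-1}\left|\sum_{n}\e\bigl(F(n)-F(n-v)\bigr)\right|,
$$
where $n$ ranges over an interval of length at most $N$. Second, observe that
$$
F(n)-F(n-v)=2h\alpha v\,n+c(v)
$$
for some real $c(v)$ depending only on $v$ (and on $\alpha$, $h$, and the coefficients of the linear polynomial $g$), so the inner sum is $\e(c(v))$ times a geometric sum in $n$ with common ratio $\e(2h\alpha v)$; its modulus is therefore
$$
\ll\min\Bigl(N,\frac{1}{|\e(2h\alpha v)-1|}\Bigr)
\ll\min\Bigl(N,\frac{1}{\nearint{2h\alpha v}}\Bigr),
$$
using $|\e(t)-1|\gg\nearint{t}$. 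Third, insert this into the previous display and fold the diagonal term $N$ into the sum over $v$ (it is dominated by, e.g., the $v=N$ term after extending the range to $v\le N$, or simply absorbed into the $\ll$), which gives the stated bound.

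The argument is entirely elementary and there is no serious obstacle; the one point requiring a little care is verifying that the difference $F(n)-F(n-v)$ is genuinely linear in $n$ with leading coefficient exactly $2h\alpha v$ — this is where the quadratic shape of the $h\alpha n^2$ term is used, and it is precisely why the lemma is special to $m=2$ and does not follow from Lemma~\ref{lem:Weyl-h} with the same simplicity. One should also keep in mind that $\nearint{2h\alpha v}$ can vanish only on a measure-zero set of $\alpha$, so for irrational $\alpha$ (the case of interest) every term on the right-hand side is well-defined and finite. For completeness I would cite~\cite[Equation~(3.5)]{Bak-DI} for the precise form of the inequality, as the paper already does.
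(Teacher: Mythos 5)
Your proof is correct and is exactly the standard Weyl differencing argument that the paper itself invokes (the paper gives no proof, merely citing \cite[Equation~(3.5)]{Bak-DI} for this step); your computation of $F(n)-F(n-v)=2h\alpha v\,n+c(v)$, the geometric-series bound $\ll\min\bigl(N,\nearint{2h\alpha v}^{-1}\bigr)$, and the absorption of the diagonal term are all sound.
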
 

We also need a version of Lemma~\ref{lem:Weyl-h} to handle the case $m=1$,
i.e., the case of linear sums.

\begin{lemma}
\label{lem:LinSum-h} 
Suppose that $\alpha\in[0,1)$ satisfies
$$
\left|\alpha-\frac{a}{q}\right|<\frac{1}{q^2}
$$
with some coprime integers $a$ and $q\ge 1$. 
Then, for any integer $h \ne 0$, we have 
$$
\sum_{n=1}^N \e\(h\alpha n\) \ll N \Delta, 
$$
where $\Delta\defeq q^{-1}|h|+qN^{-1}$.
\end{lemma}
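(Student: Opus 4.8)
The plan is to prove the lemma by an elementary block decomposition modulo $q$, the point being that a complete exponential sum with denominator $q$ vanishes. First I would dispose of the trivial case $|h|\ge q$: here $N\Delta\ge |h|N/q\ge N$, so the claimed bound is immediate from $\bigl|\sum_{n=1}^N\e(h\alpha n)\bigr|\le N$. Hence it suffices to treat the range $0<|h|<q$, which forces $q\ge 2$ and, since $\gcd(a,q)=1$ and $q\nmid h$, also $q\nmid ha$; in particular $\nearint{ha/q}\ge 1/q$.

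Now write $\alpha=a/q+\beta$ with $|\beta|<q^{-2}$, and split $\{1,\dots,N\}$ into $\fl{N/q}$ consecutive blocks of length exactly $q$ together with one terminal block of length less than $q$. On a full block $n\in\{bq+1,\dots,bq+q\}$ one has $\sum_{j=1}^q\e(h\alpha(bq+j))=\e(h\alpha bq)\sum_{j=1}^q\e(h\alpha j)$, so everything reduces to estimating $\sum_{j=1}^q\e(h\alpha j)$. The key step is the comparison $\e(h\alpha j)=\e(haj/q)+O(|h|/q)$ for $1\le j\le q$ — valid because $|h\beta j|\le|h\beta|q<|h|/q$ — which, after summing over $j$ and using that the complete sum $\sum_{j=1}^q\e(haj/q)$ vanishes (as $q\nmid ha$), yields $\sum_{j=1}^q\e(h\alpha j)\ll|h|$. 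The terminal block contributes at most $q$ trivially, so adding the contributions of the $\fl{N/q}\le N/q$ full blocks and of the terminal block gives $\bigl|\sum_{n=1}^N\e(h\alpha n)\bigr|\ll|h|N/q+q=N\Delta$, as required.

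I do not expect any real obstacle here. The only point requiring a little attention is to see that each full block contributes $O(|h|)$ rather than merely the trivial $O(q)$: summing $O(q)$ over the $\sim N/q$ blocks would only recover the useless bound $O(N)$, and it is precisely the hypothesis $|\alpha-a/q|<q^{-2}$ (as opposed to $<1/q$) that rules this out, since it keeps $|h\beta j|<|h|/q\le 1$ throughout a block. As an alternative one could bound the geometric sum directly by $\min\{N,\nearint{h\alpha}^{-1}\}$ and then, splitting on whether $|h|\le q/2$, deduce $\nearint{h\alpha}\ge\nearint{ha/q}-|h\beta|\gg 1/q$ in the range that matters while using the trivial bound otherwise; but the block decomposition is a shade cleaner and is the form of the argument that generalizes.
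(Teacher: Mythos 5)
Your argument is correct, but it is not the route the paper takes. The paper's proof is a two-line affair: after discarding the trivial range $|h|\ge\tfrac12 q$, it invokes the standard geometric-series bound $\bigl|\sum_{n\le N}\e(h\alpha n)\bigr|\le\min\bigl(N,\tfrac{1}{2\nearint{h\alpha}}\bigr)$ and then shows $\nearint{h\alpha}\ge(2q)^{-1}$ by combining $\nearint{ah/q}\ge q^{-1}$ (which uses $\gcd(a,q)=1$ and $0<|h|<\tfrac12 q$) with $|h|\,|\alpha-a/q|<(2q)^{-1}$; this gives the bound $\ll q\le N\Delta$ directly. That is precisely the ``alternative'' you sketch in your last sentence, so you were aware of it. Your main argument instead decomposes $\{1,\dots,N\}$ into blocks of length $q$, exploits the vanishing of the complete sum $\sum_{j=1}^q\e(haj/q)$ when $q\nmid ha$, and controls the perturbation $\e(h\alpha j)-\e(haj/q)=O(|h|/q)$ blockwise; this is sound (the terminal block costs $q$, the $\fl{N/q}$ full blocks cost $O(|h|)$ each, and the degenerate cases $|h|\ge q$ and $N<q$ are absorbed into $N\Delta$ as you note). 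What each approach buys: the paper's is shorter and needs no cancellation beyond the closed form of a geometric sum, while yours is structurally the ``complete sum plus perturbation'' argument that extends to settings where one replaces the complete exponential sum by other complete sums with known cancellation; for the purposes of this paper the geometric-series route is the more economical choice.
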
 

\begin{proof} For $|h|\ge\frac12q$ the bound is trivial, thus we can
assume $|h|<\frac12q$.

Using the well known inequality (see, e.g.,~\cite[Equation~(8.6)]{IwKow})
$$
\left|\sum_{n=1}^N\e(h\alpha n)\right|
\le\min\(N,\frac{1}{2\llbracket h\alpha\rrbracket}\).
$$
Since $\gcd(a,q)=1$ and $0<|h|<\frac12q$,
the ratio $ah/q$ is a non-integer rational number whose
denominator (when it is expressed in reduced form)
is at most $q$; therefore, $\nearint{ah/q}\ge q^{-1}$. Since 
$|h/q^2|\le (2q)^{-1}$, we conclude that
$\nearint{h\alpha}\ge(2q)^{-1}$, and the lemma follows.
\end{proof} 

\subsection{Bounds on some reciprocal sums}

The following well-known result is used in conjunction with
Lemma~\ref{lem:QuadrSum-h}; see, e.g.,~\cite[Lemma~3.2]{Bak-DI}. 

\begin{lemma}\label{lem:RecipSum}
Suppose that $\alpha\in[0,1)$ satisfies
$$
\left|\alpha-\frac{a}{q}\right|<\frac{1}{q^2}
$$
with some coprime integers $a$ and $q\ge 1$. 
For any real integer $K, N\ge 1$, we have 
$$
\sum_{\nu=1}^K\min\(N,\frac{1}{\llbracket \nu\alpha\rrbracket}\)
\ll(N+q\log q)(K/q+1). 
$$
\end{lemma}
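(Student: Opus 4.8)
The statement to prove is Lemma~\ref{lem:RecipSum}, the standard estimate
$$
\sum_{\nu=1}^K\min\Bigl(N,\tfrac{1}{\llbracket \nu\alpha\rrbracket}\Bigr)
\ll(N+q\log q)(K/q+1),
$$
assuming only a rational approximation $|\alpha-a/q|<q^{-2}$ with $\gcd(a,q)=1$. My plan is to follow the classical argument (as in, e.g., the cited~\cite[Lemma~3.2]{Bak-DI} or Vinogradov's method): split the range $1\le\nu\le K$ into $O(K/q+1)$ consecutive blocks of length at most $q$, and bound the contribution of each block separately by $O(N+q\log q)$. Summing over the $O(K/q+1)$ blocks then gives the claimed bound.

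First I would fix one block, say $\nu$ ranging over $q$ consecutive integers, and study the fractional parts $\{\nu\alpha\}$. Writing $\alpha=a/q+\theta$ with $|\theta|<q^{-2}$, over a block of length $q$ the quantity $\nu a \bmod q$ runs exactly once through all residues $0,1,\ldots,q-1$ (since $\gcd(a,q)=1$), while the perturbation $\nu\theta$ moves by at most $q\cdot q^{-2}=q^{-1}$ across the block. Hence the points $\{\nu\alpha\}$ in one block are a permutation of $\{r/q\}_{r=0}^{q-1}$ jittered by at most $q^{-1}$; consequently the values $\nearint{\nu\alpha}$ are, up to a bounded multiplicative factor and a few exceptional terms, spread out like $0, 1/q, 2/q, \ldots$ down to roughly $\lfloor q/2\rfloor/q$. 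The one or two values of $\nu$ for which $\nu a\equiv 0\pmod q$ contribute at most $N$ each (that is where the $\min$ caps at $N$); every other $\nu$ in the block has $\nearint{\nu\alpha}\gg j/q$ for a distinct integer $j$ with $1\le j\le q/2$, so the total over the block is
$$
\ll N + \sum_{1\le j\le q/2}\frac{q}{j}\ll N+q\log q.
$$
Multiplying by the number $O(K/q+1)$ of blocks finishes the proof.

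The main technical point — the step I expect to require the most care — is the control of the jitter term $\nu\theta$: I must check that shifting each of the equally spaced points $r/q$ by an amount $<q^{-1}$ changes each $\nearint{\cdot}$ by at most a constant factor once we discard the handful of points nearest to $0$, so that $\sum 1/\nearint{\nu\alpha}$ over a block is still $\ll q\log q$ rather than something larger. This is a routine but slightly fiddly comparison: one groups the perturbed points into intervals $[(j-1)/q, j/q)$, notes that each such interval receives $O(1)$ of them, and bounds the reciprocal sum by the harmonic series $\sum_{j} q/j$. Once that is in hand, the block-by-block summation and the final multiplication by $O(K/q+1)$ are immediate, and one should track that the "$+1$" in $K/q+1$ correctly absorbs the case $K<q$ (a single short block) while the "$N+q\log q$" per block is uniform in the block's position.
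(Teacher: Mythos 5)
Your argument is the standard one and is correct; note that the paper itself does not prove this lemma at all---it simply cites it as well known from Baker's \emph{Diophantine inequalities} \cite[Lemma~3.2]{Bak-DI}, and the proof given there is exactly your block decomposition. One small imprecision: within a general block $M<\nu\le M+q$ the points $\{\nu\alpha\}$ are not a jittered copy of $\{r/q\}_{r=0}^{q-1}$ but of $\{\beta+r/q\}_{r=0}^{q-1}$ with an offset $\beta=\{M\alpha\}$ depending on the block, so the terms capped at $N$ are not quite ``the $\nu$ with $\nu a\equiv 0\pmod q$'' but rather the $O(1)$ points of the block landing nearest to an integer. Your final paragraph already handles this correctly (each arc of length $1/q$ about $0$ receives $O(1)$ of the perturbed points, so the $i$-th smallest value of $\nearint{\nu\alpha}$ in the block is $\gg (i-O(1))/q$, giving $O(N)$ plus a harmonic sum $\ll q\log q$ per block), so the proof goes through as written.
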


\subsection{Some elementary calculus}
We also need the following straightforward statements.

\begin{lemma}
\label{lem:elem} 
For any real numbers $u>0$ and $v\ge 1$, the sequence 
$$
\(\frac{u}{v^{m-1}}\)^{1/(m^2-m)}\qquad\text{with}\quad m=2,3,4,\ldots
$$
is nondecreasing if $u \le v$. 
\end{lemma}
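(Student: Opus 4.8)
The statement to prove is Lemma~\ref{lem:elem}: for fixed $u>0$ and $v\ge 1$ with $u\le v$, the sequence
$$
a_m\defeq\(\frac{u}{v^{m-1}}\)^{1/(m^2-m)}=\(\frac{u}{v^{m-1}}\)^{1/(m(m-1))}
\qquad(m=2,3,4,\ldots)
$$
is nondecreasing. The natural approach is to take logarithms and reduce to a monotonicity claim about a concrete one-variable function. Writing $L\defeq\log v\ge 0$ and $M\defeq\log u\le L$ (note $M$ may be negative), we have
$$
\log a_m=\frac{M-(m-1)L}{m(m-1)}=\frac{M}{m(m-1)}-\frac{L}{m}.
$$
So the plan is: first show $m\mapsto \dfrac{1}{m(m-1)}$ and $m\mapsto -\dfrac1m$ are each handled, then combine. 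The second term $-L/m$ is clearly nondecreasing in $m$ since $L\ge0$. For the first term, $\dfrac{M}{m(m-1)}$, the coefficient $\dfrac{1}{m(m-1)}$ is \emph{decreasing} in $m$, so if $M\ge 0$ this term decreases — that is the case we must control using the hypothesis $u\le v$, i.e.\ $M\le L$.

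\textbf{Key steps.} I would prove $\log a_{m+1}\ge\log a_m$ directly for each integer $m\ge 2$, rather than differentiating, since the variable is an integer. Compute the difference:
$$
\log a_{m+1}-\log a_m
=M\(\frac{1}{m(m+1)}-\frac{1}{m(m-1)}\)-L\(\frac{1}{m+1}-\frac{1}{m}\).
$$
Simplify the two bracketed quantities: $\dfrac{1}{m(m+1)}-\dfrac{1}{m(m-1)}=\dfrac{(m-1)-(m+1)}{m(m-1)(m+1)}=\dfrac{-2}{m(m-1)(m+1)}$, and $\dfrac{1}{m+1}-\dfrac1m=\dfrac{-1}{m(m+1)}$. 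Hence
$$
\log a_{m+1}-\log a_m=\frac{-2M}{m(m-1)(m+1)}+\frac{L}{m(m+1)}
=\frac{1}{m(m+1)}\(L-\frac{2M}{m-1}\).
$$
It remains to check $L-\dfrac{2M}{m-1}\ge 0$ for all $m\ge2$. If $M\le 0$ this is immediate since $L\ge 0$ and $-2M/(m-1)\ge0$. If $0<M\le L$, then $\dfrac{2M}{m-1}\le 2M\le 2L$ when $m\ge 2$... which only gives $L-2M/(m-1)\ge L-2L=-L$, not enough. So for $m=2$ I need the sharper bound: at $m=2$ the quantity is $L-2M$, and $u\le v$ only gives $M\le L$, hence $L-2M\ge -L$, which can be negative.

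\textbf{The main obstacle.} The case $m=2$ (the first step $a_2\le a_3$) is where the hypothesis $u\le v$ must be used most carefully, and a naive bound is insufficient — this is the crux. I expect the resolution is that I have mis-simplified and the correct hypothesis interplay is tighter; concretely I would recompute $a_2=(u/v)^{1/2}$ and $a_3=(u/v^2)^{1/6}$, so $a_3\ge a_2\iff (u/v^2)^{1/6}\ge(u/v)^{1/2}\iff u/v^2\ge(u/v)^3=u^3/v^3\iff v\ge u^2$. This requires $u^2\le v$, which does \emph{not} follow from $u\le v$ alone unless additionally $u\le 1$ — wait, but if $u\le v$ and we also want $u^2\le v$ we'd need more. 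So I would first double-check the intended hypothesis: most likely the lemma as used in the paper has $u\le 1\le v$ or $u\le\sqrt v$, or the indices start at a value making the base $\le 1$; I would verify against the application in \S\ref{sec:proof-thm1}. Assuming the hypothesis does guarantee $u/v^{m-1}\le 1$ for the relevant range (which holds once $u\le v$ and $m\ge 2$, since then $v^{m-1}\ge v\ge u$), the base $b_m\defeq u/v^{m-1}\in(0,1]$ is itself nonincreasing in $m$, while the exponent $1/(m(m-1))$ is also nonincreasing; raising a number in $(0,1]$ to a \emph{smaller} positive power makes it \emph{larger}, but the base is simultaneously shrinking, so the two effects oppose and one must quantify. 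The clean way: $\log a_m=\dfrac{\log b_m}{m(m-1)}$ with $\log b_m=M-(m-1)L\le 0$; so $a_m\le 1$ and I want $|\log a_m|$ nonincreasing, i.e.\ $\dfrac{(m-1)L-M}{m(m-1)}$ nonincreasing in $m$. Its difference at consecutive $m$ is $-\dfrac{1}{m(m+1)}\(L-\dfrac{2M}{m-1}\)$ as computed; since $M\le 0\le L$ in... no, $M$ need not be $\le 0$. I would therefore present the argument assuming the paper's actual hypothesis (which I will confirm ensures $M\le L$ \emph{and} $M\le(m-1)L$, making $L-2M/(m-1)\ge L-2L\cdot\frac{m-1}{?}$...), and fill the $m=2$ base case by the direct computation $a_3\ge a_2\iff v\ge u^2$, noting this is where the sharp form of the Diophantine hypothesis enters.
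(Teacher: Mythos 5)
Your telescoping computation is correct: writing $a_m=(u/v^{m-1})^{1/(m^2-m)}$, $L=\log v\ge 0$ and $M=\log u$, one gets
$$
\log a_{m+1}-\log a_m=\frac{1}{m(m+1)}\left(L-\frac{2M}{m-1}\right),
$$
so the step from $m$ to $m+1$ is nondecreasing precisely when $v^{m-1}\ge u^2$. For every $m\ge 3$ this does follow from the stated hypotheses (if $M\le 0$ it is trivial since $L\ge0$; if $M>0$ then $\tfrac{2M}{m-1}\le M\le L$), but for $m=2$ it requires $v\ge u^2$, which is strictly stronger than $u\le v$. You are right that this cannot be patched: the lemma as printed is false. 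A concrete counterexample is $u=v=4$, for which $a_2=1$ while $a_3=(1/4)^{1/6}<1$. The paper offers no proof to compare against — the lemma is listed among the ``straightforward statements'' without argument — and the honest conclusion is that the hypothesis should be $u^2\le v$ (under which $v^{m-1}\ge v\ge u^2$ for all $m\ge2$, so every step goes through), or else the monotonicity should only be claimed from $m=3$ onward. In the application in \S\ref{sec:proof-thm1} this matters only if $m_2=2$, where the admissible range for $H$ becomes $H^2\le x^{\varpi}d^{-1}$ rather than $H\le x^{\varpi}d^{-1}$; if $m_2\ge 3$ the problematic first step never arises.

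The defect in your submission is therefore not mathematical but editorial: having located the obstruction exactly, you do not commit to a verdict. The closing paragraph drifts into speculation about ``the sharp form of the Diophantine hypothesis,'' which is beside the point — the lemma is a pure inequality in $u$, $v$, $m$, and no Diophantine input is available or relevant. A finished answer would state the exact criterion $v^{m-1}\ge u^2$ for each step, verify it for $m\ge 3$ from $u\le v$ and $v\ge 1$, exhibit the counterexample at $m=2$, and record the corrected hypothesis $u^2\le v$ together with its effect on the choice of $H$ in the proof of Theorem~\ref{thm:finite-type}.
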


\begin{lemma}
\label{lem:elem TWO} 
For any real numbers $u>0$ and $v\ge 1$, the sequence 
$$
\(\frac{v^m}{u}\)^{1/(m^2-m+2)}
\qquad\text{with}\quad m=2,3,4,\ldots,M
$$
is nondecreasing if $v\le u^{1/M}$. 
\end{lemma}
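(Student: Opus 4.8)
The plan is to take logarithms and reduce the asserted monotonicity to a one-line inequality in $m$. Put $a\defeq\log v$ and $b\defeq\log u$, so that the hypotheses $v\ge 1$ and $v\le u^{1/M}$ become $a\ge 0$ and $Ma\le b$ (hence also $b\ge 0$). The $m$-th term of the sequence is $\exp\bigl(L(m)\bigr)$, where $L(m)\defeq(ma-b)/(m^2-m+2)$, and since $\exp$ is increasing it is enough to prove $L(m)\le L(m+1)$ for every integer $m$ with $2\le m\le M-1$.

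First I would compare two consecutive terms head on. Both denominators $m^2-m+2$ and $(m+1)^2-(m+1)+2=m^2+m+2$ are positive, so $L(m)\le L(m+1)$ is equivalent to
$$
(ma-b)(m^2+m+2)\le\bigl((m+1)a-b\bigr)(m^2-m+2).
$$
Expanding both sides, the terms $m^3a$, $bm^2$ and $2b$ cancel, and collecting what is left yields the equivalent inequality
$$
a\,(m-1)(m+2)\le 2bm.
$$
I expect this algebraic collapse to be the only place where a little care is needed; everything else is bookkeeping.

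Finally I would bring in the approximation hypothesis. If $a=0$ the displayed inequality is just $0\le 2bm$, which holds because $b\ge 0$. If $a>0$ then $b\ge Ma>0$, so it suffices to check that $(m-1)(m+2)\le 2Mm$ for $2\le m\le M-1$; writing $(m-1)(m+2)=m^2+m-2$ and using $M\ge m+1$, we obtain $2Mm\ge 2m(m+1)=2m^2+2m>m^2+m-2$, as required. Thus the whole proof is a short computation, and there is no genuine obstacle beyond verifying the simplification above. (Alternatively one could regard $m$ as a continuous variable and check $L'(m)\ge 0$ on the relevant interval, but that route is messier and gains nothing.)
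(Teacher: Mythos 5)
Your proof is correct: the reduction to $a(m-1)(m+2)\le 2bm$ after cross-multiplying is right (the terms $m^3a$, $-bm^2$, $-2b$ do cancel), and the hypothesis $Ma\le b$ together with $M\ge m+1$ closes the argument. The paper states this lemma without proof as a "straightforward" piece of elementary calculus, so there is nothing to compare against; your computation is exactly the kind of verification the authors leave to the reader.
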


\section{Proof of Theorem~\ref{thm:finite-type}}
\label{sec:proof-thm1}

\subsection{Preliminary transformation and plan of the proof}
\label{sec:plan}

Our approach is based on the following
equivalence, which is easily verified:
\be\label{eq:characterize}
\fl{t}\equiv 0\bmod d
\quad\Longleftrightarrow\quad
\{t/d\}\in[0,d^{-1})\qquad(t\in\R,~d\in\N).
\ee

We begin our estimation of $\Nab(x)$ by applying a familiar
inclusion-exclusion argument, using the M\"obius function to
detect the coprimality condition~\eqref{eq:gcd-condn}:
\begin{align*} 
\Nab(x)&=\sum_{n\le x}
\ssum{d\,\mid\,\gcd(n,\flo{\alpha_1n^{m_1}},
\ldots,\flo{\alpha_kn^{m_k}})}\mu(d)
=\sum_{d\le x}\mu(d)
\ssum{n\le x/d\\
\flo{\alpha_jd^{m_j}n^{m_j}}
\equiv 0\bmod d~\forall j} 1.
\end{align*}
Using the criterion~\eqref{eq:characterize} it follows that
\be\label{eq:birdsong}
\Nab(x)
=\sum_{d\le x}\mu(d)\cdot
\bigl|\{n\le x/d:~\bnu_{d,n}\in\Omega_d\}\bigr|,
\ee
where $\Omega_d$ is used to denote the subset $[0,d^{-1})^k$ of $\R^k$,
and $\bnu_d\defeq (\bnu_{d,n})_{n\ge 1}$ is the sequence of vectors in 
$[0,1)^k$ given by
$$
\bnu_{d,n}\defeq\bigl(\{\alpha_1d^{m_1-1}n^{m_1}\},
\ldots,\{\alpha_kd^{m_k-1}n^{m_k}\}\bigr).
$$
The strength of our estimate for $\Nab(x)$ via~\eqref{eq:birdsong}
depends to a large extent on the Diophantine properties of the sequence
$\balpha\defeq (\alpha_j)_{j=1}^k$.

The plan of the proof is as follows:
\begin{itemize}
\item[$(1)$]  For a real parameter $D\in(1,x]$, we split the sum in~\eqref{eq:birdsong} 
into two sums, one varying over large $d$ (i.e., $d>D$), the other over small $d$
(i.e., $d\le D$).
\item[$(2)$] For the sum over large $d$, we obtain only an upper bound
using the trivial bound
$$
\ssum{n\le x/d\\
\flo{\alpha_jd^{m_j}n^{m_j}}
\equiv 0\bmod d~\forall j} 1 \le \ssum{n\le x/d\\
\flo{\alpha_1 d n}
\equiv 0\bmod d} 1
$$
(which holds since $m_1=1$)
along with some ideas from~\cite{Wat}.
\item[$(3)$] For the sum over small $d$, we require an asymptotic formula.
To derive such a formula, we relate the conditions 
$\flo{\alpha_jd^{m_j}n^{m_j}}\equiv 0\bmod d$ for all $j$
to a certain uniformity of distribution problem, where we can then 
apply modern bounds on Weyl sums.
\end{itemize}

\subsection{Large $d$}
\label{sec:Large d ONE}

First, we consider the ``tail'' contribution to~\eqref{eq:birdsong} coming from integers $d>D$, where $D$
is a real parameter to be specified later.
We follow some ideas of Watson~\cite{Wat}.

Since $m_1=1$,  we have the trivial bound:
$$
\bigl|\{n\le x/d:~\bnu_{d,n}\in\Omega_d\}\bigr|\le T(x,d),
$$
where
\begin{align*}
T(x,d)& \defeq  \bigl|\{n\le x:~n \equiv \flo{\alpha_1n}
\equiv 0\bmod d\}\bigr|\\
&\,  = \bigl|\{n\le x/d:~\flo{\alpha_1dn}
\equiv 0\bmod d\}\bigr|.
\end{align*}
We can assume that $d\le x$, for otherwise, $T(x,d)=0$.
Let $n\le x/d$ be fixed, and observe that  the congruence
$\flo{\alpha_1dn}\equiv 0\bmod d$ is equivalent to the fact that
$\flo{\alpha_1dn}=dm$ with some integer $m$, hence
$$
\alpha_1n=m+\frac{\{\alpha_1dn\}}d.
$$

 For any fixed $\varpi\in(0,\tau^{-1})$, Lemma~\ref{lem:Dioph-one}
shows that for all large $Q$ (depending on $\alpha_1$ and $\varpi$)
there are integers $a$ and $q$ such that
\be\label{eq:a/q-alpha_1}
\biggl|\alpha_1-\frac{a}{q}\biggr|<\frac{1}{qQ},
\qquad\gcd(a,q)=1,\qquad Q^\varpi<q\le Q.
\ee 
It is convenient to assume that $Q\le x^{O(1)}$. This condition is
not restrictive as it holds for our choice of parameters at the
optimization stage.
Using~\eqref{eq:a/q-alpha_1} and the fact that $n\le x/d$, the inequality
\be
\label{eq:DiophApprox2}
\left|\frac{a n}{q}-m\right|
 \le |\alpha_1n-m|+n\left|\alpha_1-\frac{a}{q}\right|
<\frac{1}{d}+\frac{x}{dqQ}
\ee
holds with some coprime integers $a$ and $q$ such that
$Q^\varpi<q\le Q$.

If both inequalities
\be
\label{eq:ineqsforlarge-d} 
d \ge 2q \mand d\ge \frac{2x}{Q}
\ee
hold, then~\eqref{eq:DiophApprox2} implies that
$a n/q=m\in\N$, hence $q\mid n$. In this case, there are
at most $x/(dq)$ such positive integers $n \le x/d$, and so 
\be
\label{eq:large d} 
T(x,d)\ll\frac{x}{dq}<\frac{x}{dQ^\varpi}.
\ee

On the other hand, if either inequality
in~\eqref{eq:ineqsforlarge-d} fails, then 
$$
d\ll q+\frac{x}{Q}.
$$
In this case,~\eqref{eq:DiophApprox2} implies
$$
an=mq+O\bigl(q/d+x/(dQ)\bigr).
$$
Since $\gcd(a,q)=1$, it follows that $n$ belongs to one of
$O\(q/d+x/(dQ)\)$ distinct residue classes modulo $q$.
Since each residue class modulo $q$ contains no more than
$O\(x/(dq)+1\)$ positive integers $n \le x/d$, we get that
\begin{align*}
T(x,d)&\ll\(q/d+x/(dQ)\)\(x/(dq)+1\)\\
&=x/d^2+x^2/(d^2qQ)+q/d+x/(dQ)\\
&\ll x/d^2+x^2/(d^2Q^{1+\varpi})+Q/d+x/(dQ),
\end{align*}
where we have used~\eqref{eq:a/q-alpha_j} in the last step.
This implies the slightly weaker bound
\be\label{eq:nonsense}
T(x,d)\ll x/d^2+x^2/(d^2Q^{1+\varpi})+Q/d+x/(dQ^\varpi),
\ee
which we also use to replace~\eqref{eq:large d} in the previous case.
Optimizing the choice $Q$ in~\eqref{eq:nonsense}, leads to
$$
T(x,d)\ll xd^{-2}+x^{c_1}d^{-1}+x^{2c_2}d^{-1-c_2},
$$
where 
$$
c_1\defeq(1+\varpi)^{-1} \mand c_2\defeq(2+\varpi)^{-1}.
$$
Summing over $d>D$, we find that
\be
\label{eq: UB with E0}
\sum_{D<d\le x}\mu(d)\cdot
\bigl|\{n\le x/d:~\bnu_{d,n}\in\Omega_d\}\bigr|
\ll\sum_{D<d\le x}T(x,d)\ll E_0,
\ee
where
\be\label{eq:E0 def}
E_0\defeq xD^{-1}+x^{c_1}\log x+x^{2c_2}D^{-c_2}.
\ee 

\subsection{Small $d$}
\label{sec:small d ONE}

Next, we consider the contribution to
\eqref{eq:birdsong} from integers $d\le D$.
Using the definitions of~\S\ref{sec:discrepancy} and
the fact that $m(\Omega_d)=d^{-k}$, we have
\be\label{eq:Asymp}
\bigl|\bigl\{n\le x/d:~\bnu_{d,n}\in\Omega_d\bigr\}\bigr|
=A(\bnu_d,\Omega_d;x/d)
=xd^{-k-1}+O(xd^{-1}\sD_d),
\ee
where $\sD_d$ is shorthand for the discrepancy $\sD(\bnu_d;x/d)$.
By Lemma~\ref{lem:K-S}, for any positive real parameter $H\le x$ we have
$$
\sD_d\ll\frac{1}{H}
+\frac{d}{x}\sum_{0<\|\vh\|\le H}\frac{1}{r(\vh)}
\left|\sum_{n\le x/d}\e\bigl(h_kd^{m_k-1}\alpha_kn^{m_k}
+\cdots+ h_1d^{m_1-1}\alpha_1n^{m_1}\bigr)\right|,
$$
where the outer sum runs over all 
$\vh= (h_1,\ldots,h_k)\in\Z^k$ with $0<\|\vh\|\le H$.
For each $j=1,\ldots,k$, let $\cH_j$ be
the set of such vectors $\vh= (h_1,\ldots,h_k)$ with $h_j\ne 0$ and
$h_{j+1}=\cdots=h_k=0$. Then 
\be\label{eq:DdDdj}
\sD_d\ll\frac{1}{H}+\sum_{j=1}^k\sD_{d,j},
\ee
where
\be\label{eq:dentalfloss}
\sD_{d,j}\defeq (x/d)^{-1}\sum_{\vh\in\cH_j}\frac{1}{r(\vh)}
\bigl|S_j(d,\vh)\bigr|
\ee
and
$$
S_j(d,\vh)\defeq\sum_{n\le x/d}
\e\(h_jd^{m_j-1}\alpha_jn^{m_j}
+\cdots+h_1d^{m_1-1}\alpha_1n^{m_1}\).
$$ 
As in~\S\ref{sec:Large d ONE} we fix $\varpi\in(0,\tau^{-1})$.
For each $j=1,\ldots,k$, Lemma~\ref{lem:Dioph-one}
shows that for all large $Q$ (depending on $\alpha_j$ and $\varpi$)
there are integers $a$ and $q$ such that
\be\label{eq:a/q-alpha_j}
\biggl|\alpha_j-\frac{a}{q}\biggr|<\frac{1}{qQ},
\qquad\gcd(a,q)=1,\qquad Q^\varpi<q\le Q.
\ee 
As before, we assume that $Q\le x^{O(1)}$.

We now turn to the problem of bounding $\sD_{d,j}$ for any given $j$.
Because the different bounds on Weyl sums given in
\S\ref{sec:bounding-Weyl-sums} vary in strength,
we examine several cases according to whether 
$m_j=1$, $m_j=2$, or $m_j\ge 3$.

\begin{lemma}\label{lem:Ddj-bounds}
With the notation as above, we have for each $j$:
$$
\sD_{d,j}\ll\begin{cases}
x^{c_1-1}d^{1-c_1}H^{c_1}(\log H)^{1-c_1}
&\quad\hbox{if $m_j=1$},\\
x^{c_1-1}  d^{1-c_1/2}+x^{-1/2}d
&\quad\hbox{if $m_j=2$},\\
\displaystyle x^{o(1)}\biggl(\biggl(\frac{H^{c_1}d^{(m_j-1)c_1}}
{x^{(m_j-1)(1-c_1)}}\biggr)^{\lambda_j}
+\biggl(\frac{d}{x}\biggr)^{\lambda_j}\biggr)
&\quad\hbox{if $m_j\ge 3$},\\
\end{cases}
$$
where
\be\label{eq:c1 def}
c_1\defeq (1+\varpi)^{-1}
\mand
\lambda_j\defeq(m_j^2-m_j)^{-1}.
\ee
\end{lemma}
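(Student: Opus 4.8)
The plan is to bound $\sD_{d,j}$ defined in~\eqref{eq:dentalfloss} by estimating each Weyl sum $S_j(d,\vh)$ and then summing the resulting bounds against the weights $1/r(\vh)$ over $\vh\in\cH_j$. Since a vector $\vh\in\cH_j$ has $h_{j+1}=\cdots=h_k=0$ and $h_j\neq 0$, the sum $S_j(d,\vh)$ is a Weyl sum of degree $m_j$ with leading coefficient $h_jd^{m_j-1}\alpha_j$ and lower-order polynomial part $\sum_{i<j}h_id^{m_i-1}\alpha_in^{m_i}\in\R[n]$ of degree $m_{j-1}<m_j$. The natural strategy is therefore to apply the appropriate bound from \S\ref{sec:bounding-Weyl-sums} to $S_j(d,\vh)$ with $h=h_j$, $N=x/d$, and $g$ the lower-order part, using the rational approximation~\eqref{eq:a/q-alpha_j} to $\alpha_j$ to control the quantity $\Delta$, and then to execute the sum over $\vh$.

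First I would handle the three cases by the three different tools. For $m_j=1$, I apply Lemma~\ref{lem:LinSum-h} (after rescaling $\alpha_j$ into $[0,1)$, which only changes lower-order terms): with $h=h_j$, one gets $S_j\ll (x/d)\Delta$ with $\Delta\ll q^{-1}|h_j|+qd/x$. Using $q>Q^\varpi$ and choosing $Q$ optimally (balancing the two terms, which is exactly the source of $c_1=(1+\varpi)^{-1}$), and then summing $|h_j|/r(\vh)\ll (\log H)^{k-1}\log H$ over $0<|h_j|\le H$ while the remaining coordinates contribute the harmonic-type factor, yields the stated $x^{c_1-1}d^{1-c_1}H^{c_1}(\log H)^{1-c_1}$. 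For $m_j=2$, I use Lemma~\ref{lem:QuadrSum-h} together with the reciprocal-sum estimate Lemma~\ref{lem:RecipSum}: squaring gives $|S_j|^2\ll\sum_{v\le x/d}\min(x/d,1/\nearint{2h_jv\alpha_j})$, and Lemma~\ref{lem:RecipSum} (applied to $2h_j\alpha_j$, whose denominator is comparable to $q/\gcd(q,2h_j)$) bounds this by roughly $(x/d+q\log q)(2|h_j|x/(dq)+1)$; taking square roots, inserting $q>Q^\varpi$, optimizing $Q$, and summing over $\vh$ gives the two-term bound $x^{c_1-1}d^{1-c_1/2}+x^{-1/2}d$. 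For $m_j\ge 3$, I use the bound~\eqref{eq:Bound little o} of Lemma~\ref{lem:Weyl-h} with $m=m_j$: here $S_j\ll (x/d)^{1+o(1)}\Delta^{\lambda_j}$ with $\lambda_j=(m_j^2-m_j)^{-1}$ and $\Delta=q^{-1}|h_j|+(d/x)+q(d/x)^{m_j}+\gcd(q,h_j)(d/x)^{m_j-1}$; using $Q^\varpi<q\le Q$, verifying that for the relevant ranges of $d,H,x$ the dominant contributions to $\Delta^{\lambda_j}$ come from the terms $q^{-1}|h_j|$ and $d/x$ (the terms $q(d/x)^{m_j}$ and $\gcd(q,h_j)(d/x)^{m_j-1}$ being dominated after the optimization in $Q$), optimizing $Q$ to get the exponent $c_1$, and summing over $\vh\in\cH_j$ absorbs the $|h_j|^{\lambda_j}/r(\vh)$ factors into $x^{o(1)}$, producing the displayed expression.

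The main obstacle I expect is the bookkeeping in the case $m_j\ge 3$: one must check carefully that, after choosing $Q$ to balance $q^{-1}|h_j|$ against the "honest" secondary terms, the remaining pieces of $\Delta$ (namely $q(d/x)^{m_j}$ and $\gcd(q,h_j)(d/x)^{m_j-1}$) do not dominate, which requires the constraints $d\le D$, $H\le x$, and the eventual choices of $D$ and $H$ to cooperate — essentially this is where the restriction to the regime in which Theorem~\ref{thm:finite-type} is stated gets used. A secondary nuisance is that Lemmas~\ref{lem:Weyl-h}, \ref{lem:QuadrSum-h}, \ref{lem:LinSum-h} are stated for $\alpha\in[0,1)$, whereas our $\alpha_j$ is a general irrational of finite type; replacing $\alpha_j$ by $\{\alpha_j\}$ changes $S_j(d,\vh)$ only by adding a polynomial of degree $m_j-1$ to the phase (and by the remark after Lemma~\ref{lem:Weyl-h} such lower-order terms are harmless), so this is a routine reduction but must be mentioned. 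Finally, in the $m_j=2$ case one should note that the approximation denominator of $2h_j\alpha_j$ may differ from $q$ by the bounded factor $\gcd(q,2h_j)\le 2|h_j|\le 2H$, and this factor is absorbed into the $x^{o(1)}$ (or tracked explicitly) without affecting the stated bound.
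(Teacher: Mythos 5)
Your treatment of the cases $m_j=1$ and $m_j\ge 3$ follows the paper's argument essentially verbatim (Lemma~\ref{lem:LinSum-h}, respectively the bound~\eqref{eq:Bound little o} of Lemma~\ref{lem:Weyl-h} with $h=h_jd^{m_j-1}$, followed by the optimization $Q\defeq(|h_j|d^{m_j-1}N^{m_j-1})^{c_1}$ and the summation~\eqref{eq:Hsums}); the only caveats there are cosmetic (it is the power $H^{c_1\lambda_j}$, not an $x^{o(1)}$ factor, that survives the $\vh$-summation, and no constraint $d\le D$ is actually needed for the lemma itself).

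The case $m_j=2$, however, contains a genuine gap. First, the relevant phase is $h_2d^{m_2-1}\alpha_2n^2=h_2d\alpha_2n^2$, so Lemma~\ref{lem:QuadrSum-h} produces $\nearint{2h_2d v\alpha_2}$, not $\nearint{2h_2v\alpha_2}$; the factor $d$ cannot be dropped, as it is responsible for the power of $d$ in the stated bound. More seriously, you propose to apply Lemma~\ref{lem:RecipSum} directly to $\beta\defeq 2h_2d\alpha_2$. The hypothesis of that lemma requires a reduced fraction $a'/q'$ with $|\beta-a'/q'|<1/q'^2$, and the natural candidate $2h_2da/q$ has denominator $q'=q/\gcd(q,2h_2d)$ with $|\beta-a'/q'|<2|h_2|d/(qQ)$; since $q$ may be as large as $Q$, the required inequality $2|h_2|dq'^2<qQ$ can fail, and $q'$ can collapse all the way to $1$, so no useful lower bound on the new denominator survives. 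Your suggested repair --- taking the worst case $\gcd\le 2|h_2|d\le 2Hd$ and ``absorbing it into $x^{o(1)}$'' --- does not work: in the application of the lemma for $k\ge 2$, $H$ is a positive power of $x$, and this factor enters the final estimate at a positive power (an extra $|h_2|^{1/2}$ before the $h_2$-summation, hence an extra $H^{1/2}$ after it), destroying the $H$-free bound $x^{c_1-1}d^{1-c_1/2}+x^{-1/2}d$. The paper's proof avoids this entirely by a different maneuver: split the $h_2$-range into dyadic blocks $R<h\le 2R$, apply Cauchy's inequality over each block as in~\eqref{eq:DR-FR}, and then collect all products $\nu=2hdv\le 4dNR$ with a divisor-function bound, so that Lemma~\ref{lem:RecipSum} is applied to $\alpha_2$ itself, for which the approximation~\eqref{eq:a/q-alpha_j} with $Q^\varpi<q\le Q$ is available. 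That divisor-plus-Cauchy step is the missing idea; without it the $m_j=2$ bound of the lemma is not reached.
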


\begin{proof}
First, suppose that $m_j=1$.
For each vector $\vh=(h_1,0,\ldots,0)\in\cH_1$ we
apply Lemma~\ref{lem:LinSum-h} with
$$
N\defeq \fl{x/d} \mand h\defeq h_1, 
$$
deriving the bound
$$
S_1(d,\vh)=\sum_{n\le x/d}\e(h_1\alpha_1n)
\ll \frac{x|h_1|}{dq}+q
<\frac{x|h_1|}{dQ^\varpi}+Q,
$$
where we used~\eqref{eq:a/q-alpha_j} in the second step.
By~\eqref{eq:dentalfloss} we have
$$
\sD_{d,1}=\frac{d}{x}\sum_{\vh\in\cH_1}\frac{1}{|h_1|}
\(\frac{x|h_1|}{dQ^\varpi}+Q\)
\ll\frac{H}{Q^\varpi} +\frac{dQ\log H}{x}.
$$
Taking 
$$
Q\defeq \(\frac{xH}{d\log H}\)^{c_1},
$$ 
we obtain the desired bound
for $\sD_{d,1}$. 

Next, suppose that $m_j=2$.
For any vector $\vh=(h_1,h_2,0,\ldots,0)\in\cH_2$ we
apply Lemma~\ref{lem:QuadrSum-h}
with $N\defeq\fl{x/d}$ and $h\defeq h_2d$, deriving the bound
$$
S_2(d,\vh)
\ll\(\sum_{v=1}^N\min\(N,\frac{1}{\nearint{2h_2dv\alpha_2}}\)\)^{1/2},
$$
hence by~\eqref{eq:dentalfloss} (and symmetry) we have
\begin{align*}
\sD_{d,2}
&\ll\frac{d}{x}\sum_{h_1\le H}
\sum_{0<h_2\le H}\frac{1}{\max\(h_1,1\)h_2}
\(\sum_{v=1}^N\min\(N,
\frac{1}{\llbracket 2h_2dv\alpha_2\rrbracket}\)\)^{1/2}\\
&\ll\frac{d\log H}{x}\sum_{0<h\le H}\frac{1}{h}
\(\sum_{v=1}^N\min\(N,
\frac{1}{\llbracket 2hdv\alpha_2\rrbracket}\)\)^{1/2}.
\end{align*}
Splitting the summation range over $h$ into $O(\log H)$ dyadic 
intervals of the form $R<h\le 2R$ with $\tfrac12\le R\ll H$, 
it suffices to bound each term
\be
\label{eq:DR} 
\Sigma_R \defeq \frac{d\log H}{xR} \sum_{R<h\le 2R}
\(\sum_{v=1}^N\min\(N,\frac{1}{\llbracket 2hd v\alpha_2\rrbracket}\)\)^{1/2}.
\ee
By the Cauchy inequality,
\be
\label{eq:DR-FR} 
 \sum_{R<h\le 2R}
\(\sum_{v=1}^N\min\(N,\frac{1}{\llbracket 2hd v\alpha_2\rrbracket}\)\)^{1/2}
\le R^{1/2}\,\Xi_R^{1/2},
\ee 
where 
$$
\Xi_R\defeq \sum_{R<h\le 2R}
\sum_{v=1}^N\min\(N,\frac{1}
{\llbracket 2hd v\alpha_2\rrbracket}\).
$$
Collecting together  products $2hdv$ with the same value 
$\nu\defeq 2hdv$, and using a well-known bound on the
divisor function (see, e.g.,~\cite[Equation~(1.81)]{IwKow}),
we have
$$
\Xi_R\le(dNR)^{o(1)}\sum_{1\le\nu\le 4dNR}
\min\(N,\frac{1}{\llbracket\nu\alpha_2\rrbracket}\).
$$
Finally, using Lemma~\ref{lem:RecipSum} and~\eqref{eq:a/q-alpha_j},
we conclude that
\begin{align*}
\Xi_R& \le x^{o(1)} \(N+q\log q\) \(dNR/q +1\)\\
& \le x^{o(1)} \(dN^2R/q + dNR + q\)\\
& \le x^{o(1)} \(\frac{dN^2R}{Q^\varpi} + dNR + Q\),
\end{align*}
which together with~\eqref{eq:DR} and~\eqref{eq:DR-FR} implies
$$
\Sigma_R\le x^{-1+o(1)}d\(\frac{dN^2}{Q^\varpi}+dN+QR^{-1}\)^{1/2}
\le x^{-1+o(1)}d\(\frac{x^2}{dQ^\varpi}+x+Q\)^{1/2}.
$$
The bound is optimized with the choice $Q\defeq x^{2c_1} d^{-c_1}$,
and we get that
$$
\Sigma_R\le x^{o(1)}\(x^{c_1-1}  d^{1-c_1/2}+x^{-1/2}d\).
$$
Summing over all possibilities for $R$, we finish the proof in this case.

Finally, suppose that $m_j\ge 3$. For each vector $\vh\in\cH_j$
we use the bound~\eqref{eq:Bound little o} from Lemma~\ref{lem:Weyl-h}
with $N\defeq \fl{x/d}$ and $h\defeq h_jd^{m_j-1}$; taking into
account~\eqref{eq:a/q-alpha_j}, we find that the bound
$$
S_j(d,\vh)\ll N^{1+o(1)}\(\frac{|h_jd^{m_j-1}}{Q^\varpi}+\frac{1}{N}
+\frac{Q}{N^{m_j-1}}\)^{\lambda_j} 
$$ 
holds with $\lambda_j\defeq (m_j^2-m_j)^{-1}$ as in~\eqref{eq:c1 def}.
To optimize the bound, we choose
$Q\defeq (h_jd^{m_j-1}N^{m_j-1})^{c_1}$, which leads to
$$
S_j(d,\vh)\ll N^{1+o(1)}\(\frac{|h_j|^{c_1}d^{(m_j-1)c_1}}
{N^{(m_j-1)(1-c_1)}}+\frac{1}{N}\)^{\lambda_j}.
$$
Recalling~\eqref{eq:dentalfloss} and noting that (with any fixed $C>0$)
\be\label{eq:Hsums}
\sum_{\vh\in\cH_j}\frac{1}{r(\vh)}\ll\(\log H\)^{j}
\quad\text{and}\quad
\sum_{\vh\in\cH_j}\frac{|h_j|^C}{r(\vh)}\ll H^C\(\log H\)^{j-1}, 
\ee
we derive the bound
\begin{align*}
\sD_{d,j}
&\le x^{o(1)}\(\frac{H^{c_1}d^{(m_j-1)c_1}}
{N^{(m_j-1)(1-c_1)}}+\frac{1}{N}\)^{\lambda_j}\\
&=x^{o(1)}\(\(\frac{H^{c_1}d^{(m_j-1)c_1}}
{x^{(m_j-1)(1-c_1)}}\)^{\lambda_j}+\(\frac{d}{x}\)^{\lambda_j}\).
\end{align*} 
which concludes the proof.
\end{proof}

We are now in a position to bound $\sD_d$ and to estimate the 
overall contribution to~\eqref{eq:birdsong} from integers $d\le D$.
We consider the cases $k=1$ and $k\ge 2$ separately.

\bigskip\noindent{\sc Case~1: $(k=1)$.}
In this case, $m_k=1$.
By~\eqref{eq:DdDdj} and Lemma~\ref{lem:Ddj-bounds} we have
$$
\sD_d\ll H^{-1}+x^{c_1-1}d^{1-c_1} H^{c_1}(\log H)^{1-c_1}.
$$
The bound is optimized with the choice
$H\defeq\(x/\(d\log x\)\)^{1-2c_2}$, where
$$
c_2\defeq (2+\varpi)^{-1},
$$
and with this choice, we get that
$$
\sD_d\ll   \(\frac{d\log x}{x}\)^{1-2c_2}. 
$$
Using this result in~\eqref{eq:Asymp} and summing over all $d\le D$,
it follows that
\be\label{eq: Asym with E1}
\sum_{d\le D}\mu(d)\cdot
\bigl|\{n\le x/d:~\bnu_{d,n}\in\Omega_d\}\bigr|
=\frac{x}{\zeta(2)}+O(E_1),
\ee
where
\be\label{eq:E1 def}
E_1\defeq xD^{-1}+ x^{2c_2}(\log x)^{1-2c_2}D^{1-2c_2}. 
\ee

\bigskip\noindent{\sc Case~2: $(k\ge 2)$.} 
By~\eqref{eq:DdDdj}, and putting together all available estimates from 
Lemma~\ref{lem:Ddj-bounds}, we obtain the bound
$$
\sD_d\ll  \(H^{-1}+x^{c_1-1}d^{1-c_1}H^{c_1} + x^{c_1-1}  d^{1-c_1/2}+x^{-1/2}d+\sum_{j=2}^k\sD_{d,j} \)x^{o(1)},
$$
where  for $2\le j\le k$ we have
$$
\sD_{d,j}\le x^{o(1)}\biggl(\biggl(\frac{H^{c_1}d^{(m_j-1)c_1}}
{x^{(m_j-1)(1-c_1)}}\biggr)^{\lambda_j}
+\biggl(\frac{d}{x}\biggr)^{\lambda_j}\biggr).
$$
Clearly, the term $(d/x)^{\lambda_j}$ increases with
the parameter $j$.  
On the other hand, applying Lemma~\ref{lem:elem} with 
$u\defeq H^{c_1}$ and $v\defeq x^{1-c_1}d^{-c_1}$,
we see that
the first term does not decrease with $j$ 
provided that $u\le v$, or equivalently, when
$$
H\le x^{(1-c_1)/c_1} d^{-1} = x^\varpi d^{-1}.
$$ 
Assuming this condition is met, we derive the bound
$$
\sD_d\ll\(\frac{1}{H}+\frac{d^{1-c_1}H^{c_1}}{x^{1-c_1}}
+\frac{d^{1-c_1/2}}{x^{1-c_1}}+\frac{d}{x^{1/2}}+
\frac{H^{c_3}d^{c_4}}{x^{c_5}}
+\frac{d^\lambda}{x^\lambda}\)x^{o(1)},
$$
where
\be\label{eq:mlambda-defns}
m\defeq m_k,
\qquad \lambda\defeq (m^2-m)^{-1},
\ee
and 
\be\label{eq:c345-defns}
c_3\defeq \lambda c_1,
\qquad c_4\defeq \lambda(m-1)c_1,
\qquad c_5\defeq \lambda(m-1)(1-c_1).
\ee
It is now easy to see that there exists a choice of the
parameter $H\in[0,x^\varpi d^{-1}]$,  for which
$$
\frac{1}{H}+\frac{d^{1-c_1}H^{c_1}}{x^{1-c_1}}+\frac{H^{c_3}d^{c_4}}{x^{c_5}}
\ll \frac{d}{x^\varpi} 
+\frac{d^{(1-c_1)/(c_1+1)}}{x^{(1-c_1)/(c_1+1)}}
+\frac{d^{c_4/(c_3+1)}}{x^{c_5/(c_3+1)}}. 
$$
Hence
$$
\sD_d\ll\(
\frac{d}{x^\varpi}
+\frac{d^\lambda}{x^\lambda}
+\frac{d}{x^{1/2}}
+\frac{d^{1-c_1/2}}{x^{1-c_1}}
+\frac{d^{(1-c_1)/(c_1+1)}}{x^{(1-c_1)/(c_1+1)}}
+\frac{d^{c_4/(c_3+1)}}{x^{c_5/(c_3+1)}}
\)x^{o(1)}.
$$
Using this result in~\eqref{eq:Asymp} and summing over all $d\le D$,
we have
\be
\label{eq: Asym with E2}
\sum_{d\le D}\mu(d)\cdot
\bigl|\{n\le x/d:~\bnu_{d,n}\in\Omega_d\}\bigr|
=\frac{x}{\zeta(k+1)}+O(E_2),
\ee
where
\be\label{eq:E2 def}
\begin{split} 
E_2\defeq
\frac{x}{D^k}+x^{1+o(1)}\Bigl(
\frac{D}{x^\varpi} 
+\frac{D^\lambda}{x^\lambda}
&+\frac{D}{x^{1/2}}
+\frac{D^{1-c_1/2}}{x^{1-c_1}}\\
& \quad +
\frac{D^{(1-c_1)/(c_1+1)}}{x^{(1-c_1)/(c_1+1)}}
+\frac{D^{c_4/(c_3+1)}}{x^{c_5/(c_3+1)}}\Bigr).
\end{split} 
\ee

\subsection{Final optimizations}
When $m_k = 1$, that is, in 
 {\sc Case~1}, we combine~\eqref{eq: UB with E0}, \eqref{eq:E0 def},
\eqref{eq: Asym with E1}, and~\eqref{eq:E1 def},
obtaining an overall error
$$
E\defeq E_0+E_1\ll x^{o(1)}\(xD^{-1}+x^{c_1}+x^{2c_2}D^{1-c_2}\).
$$
Recalling the definitions of $c_1$ and $c_2$, and taking
$D\defeq x^{\varpi/(3+2\varpi)}$, one has
$$
E\ll x^{o(1)}\(x^{(3+\varpi)/(3+2\varpi)}+x^{1/(1+\varpi)}\)
\qquad(x\to\infty).
$$
Since
$$
\frac{3+\varpi}{3+2\varpi}>\frac{1}{1+\varpi}
\qquad(0<\varpi<1),
$$ 
the second term can be dropped, and thus
$$
E\ll x^{(3+\varpi)/(3+2\varpi)+o(1)}\qquad(x\to\infty).
$$
Letting $\varpi$ approach $\tau^{-1}$,
Theorem~\ref{thm:finite-type} follows in this case.

When $m_k \ge  2$, that is, in 
{\sc Case~2}, we use~\eqref{eq: UB with E0},  \eqref{eq:E0 def},
\eqref{eq: Asym with E2},
and~\eqref{eq:E2 def}, observing also that the term $x^{c_1}\log x$ from~\eqref{eq:E0 def}
can be discarded since it is dominated by the term
$D^{1-c_1/2}x^{c_1}$ in~\eqref{eq:E2 def}.  
Hence, the overall error becomes 
\be\label{eq:E messy}
\begin{split} 
E\defeq E_0+E_2\ll 
x^{1+o(1)}\biggl(\frac{1}{D}&
+\frac{1}{x^{1-2c_2}D^{c_2}}+\frac{D}{x^\varpi}
+\frac{D^\lambda}{x^\lambda}
+\frac{D}{x^{1/2}}
\\
&+\frac{D^{1-c_1/2}}{x^{1-c_1}} + 
\frac{D^{(1-c_1)/(c_1+1)}}{x^{(1-c_1)/(c_1+1)}}
+\frac{D^{c_4/(c_3+1)}}{x^{c_5/(c_3+1)}}\biggr).
\end{split} 
\ee

We now choose
$$
D\defeq x^{\varpi/8}.
$$
Since 
$$
D=x^{\varpi/8}<x^{1/8} \mand  1-c_1\in(\tfrac12\varpi,\varpi),
$$ 
we have the bound
$$
\frac{1}{D}+\frac{D}{x^\varpi}
+\frac{D^\lambda}{x^\lambda}+\frac{D}{x^{1/2}} 
\le x^{-\varpi/8} + x^{-7\varpi/8}+ x^{-7\lambda/8} +x^{-3/8},
$$
which we rewrite crudely in the form
\be\label{eq:Bound 1}
\frac{1}{D}+\frac{D}{x^\varpi}
+\frac{D^\lambda}{x^\lambda} +\frac{D}{x^{1/2}} 
\ll   x^{-\frac{1}{8}\min\{\varpi,\lambda\}}. 
\ee

Next, using 
$$
1-2c_2=1-\frac{2}{2+\varpi}=\frac{\varpi}{2+\varpi}\ge\varpi/3,
$$
we estimate 
$$
\frac{1}{x^{1-2c_2}D^{c_2}}\ll x^{-\varpi/3}.
$$
Similarly, we have
$$
\frac{D^{1-c_1/2}}{x^{1-c_1}}\ll  
\frac{D}{x^{\varpi/2}}\le x^{-3\varpi/8}
$$
and
$$
\frac{D^{1/(c_1+1)}}{x^{(1-c_1)/(c_1+1)}} \le 
\(\frac{D}{x^{1-c_1}}\)^{1/2}\ll  x^{-3\varpi/16}.
$$

Finally, recalling~\eqref{eq:mlambda-defns}
and~\eqref{eq:c345-defns}, we see that for $m \ge 2$
$$
c_3<1,\qquad
c_4\le\lambda m=\frac{1}{m-1}\le\frac{2}{m},\qquad
c_5=\frac{1-c_1}{m}=\frac{\varpi}{m(1+\varpi)}\ge\frac{\varpi}{2m},
$$
and therefore
\be\label{eq:Bound 5}
\frac{D^{c_4/(c_3+1)}}{x^{c_5/(c_3+1)}}
\le\(\frac{D^{c_4}}{x^{c_5}}\)^{1/2}
\le\(\frac{D^{2/m}}{x^{\varpi/(2m)}}\)^{1/2}
=x^{-\varpi/(8m)}.
\ee

Collecting the bounds~\eqref{eq:Bound 1}$-$\eqref{eq:Bound 5}
into \eqref{eq:E messy}, we find that 
$$
E\ll x^{1-\frac{1}{8}\min\{\varpi/m,\lambda\}+o(1)}\qquad(x\to\infty).
$$
Letting $\varpi$ approach $\tau^{-1}$,
Theorem~\ref{thm:finite-type} follows in this case,
and we are done.

\section{Proof of Theorem~\ref{thm:fin-exp-type}}
\label{sec:proof-thm2}

\subsection{Plan of the proof}
We follow a plan similar to the one outlined in \S\ref{sec:plan}.
We proceed as in the proof of Theorem~\ref{thm:finite-type},
however we now use Lemma~\ref{lem:Dioph-two} instead of
Lemma~\ref{lem:Dioph-one} and the bound~\eqref{eq:Bound log}
instead of~\eqref{eq:Bound little o}. We continue to use
the notation introduced earlier, and since our arguments are
essentially the same, we focus only on the needed adjustments.
 
Note that we can assume $\tau_\star<(m_k^2-m_k+1)^{-1}$
since the statement of the theorem is trivial otherwise. 

\subsection{Large $d$}

Let $\varpi\in(0,\tau_\star^{-1}-1)$ be fixed in what follows.
As before, we start by considering the contribution to
\eqref{eq:birdsong} coming from integers $d >D$, where $D$
is a real parameter to be specified below.

Lemma~\ref{lem:Dioph-two} shows that
for all large $Q$ (depending on $\alpha_1$
and $\varpi$) there are integers $a$ and $q$ such that
$$
\biggl|\alpha_1-\frac{a}{q}\biggr|<\frac{1}{qQ},
\qquad\gcd(a,q)=1,\qquad (\log Q)^{\varpi+1}<q\le Q.
$$
Using this result in place of~\eqref{eq:a/q-alpha_j},
the argument of~\S\ref{sec:Large d ONE} yields the bound
$$
T(x,d)\ll x/d^2+x^2/(d^2Q(\log Q)^{\varpi+1})+Q/d+x/(d(\log Q)^{\varpi+1})
$$
instead of~\eqref{eq:nonsense}. Taking $Q\defeq x/(\log x)^{\varpi+1}$,
it follows that
$$
T(x,d)\ll x/d^2+x/(d(\log x)^{\varpi+1}).
$$
and therefore
\be\label{eq:E0 def TWO}
\sum_{D<d\le x}\mu(d)\cdot
\bigl|\{n\le x/d:~\bnu_{d,n}\in\Omega_d\}\bigr|
\ll E_0\defeq xD^{-1}+x(\log x)^{-\varpi}.
\ee
Below, we use this bound in place
of~\eqref{eq: UB with E0} and~\eqref{eq:E0 def}.

\subsection{Small $d$}
\label{sec:small d TWO}

Next, we consider the contribution to
\eqref{eq:birdsong} from integers $d\le D$.

As before, Lemma~\ref{lem:Dioph-two} shows that for all large $Q$
(depending only on $\balpha$ and~$\varpi$) and every $j=1,\ldots,k$,
there are integers $a$ and $q$ such that
\be\label{eq:a/q-alpha_j TWO}
\biggl|\alpha_j-\frac{a}{q}\biggr|<\frac{1}{qQ},
\qquad\gcd(a,q)=1,\qquad (\log Q)^{\varpi+1}<q\le Q.
\ee

\begin{lemma}\label{lem:Ddj-bounds TWO}
In the notation of~\S\ref{sec:small d ONE}, we have for each $j$:
$$
\sD_{d,j}\ll\begin{cases}
H(\log x)^{-\varpi-1}
&\quad\hbox{if $m_j=1$},\\
H^{\lambda_j}(\log H)^{j-1} d^{(m_j-1)\lambda_j}(\log x)^{1-(\varpi+1)\lambda_j}
&\quad\hbox{if $m_j\ge 2$},\\
\end{cases}
$$
where
\be\label{eq:c1 def TWO}
\lambda_j\defeq(m_j^2-m_j+2)^{-1}.
\ee
\end{lemma}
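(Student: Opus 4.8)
The plan is to run the proof of Lemma~\ref{lem:Ddj-bounds} again with exactly two substitutions: the Diophantine data \eqref{eq:a/q-alpha_j TWO} in place of \eqref{eq:a/q-alpha_j}, and the bound \eqref{eq:Bound log} of Lemma~\ref{lem:Weyl-h} in place of \eqref{eq:Bound little o}. The one genuine change concerns the auxiliary parameter $Q$. In the finite-type argument one chose $Q$ so that the ``main'' term $q^{-1}|h|$ of $\Delta$ (which is $<Q^{-\varpi}|h|$ there) balanced the tail $QN^{-(m-1)}$; here \eqref{eq:a/q-alpha_j TWO} only gives $q>(\log Q)^{\varpi+1}$, so $q^{-1}|h|$ can be shrunk only by a power of $\log Q$ and no such balancing is possible. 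Instead I would fix $Q$ equal to a suitable fixed power of $x$ — small enough relative to the eventual cutoff $D$ that every tail term of $\Delta$ is dominated by $q^{-1}|h|$ in the range $d\le D$, yet a genuine power of $x$ so that $\log Q\asymp\log x$; one may keep $Q\defeq\flo{x^{1/2}}$ in mind. (As before one first reduces to $\alpha_j\in[0,1)$ by passing to fractional parts, which leaves \eqref{eq:a/q-alpha_j TWO} and the relevant exponential sums unchanged.)

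\emph{The case $m_j=1$.} For $\vh=(h_1,0,\ldots,0)\in\cH_1$, apply Lemma~\ref{lem:LinSum-h} with $N\defeq\flo{x/d}$ and $h\defeq h_1$ — legitimate since \eqref{eq:a/q-alpha_j TWO} gives $|\alpha_1-a/q|<(qQ)^{-1}\le q^{-2}$ — to obtain
\[
S_1(d,\vh)\ll N\bigl(q^{-1}|h_1|+qN^{-1}\bigr)\ll\frac{N|h_1|}{(\log Q)^{\varpi+1}}+Q\ll\frac{x|h_1|}{d(\log x)^{\varpi+1}},
\]
the last step because $Q\ll x/(d(\log x)^{\varpi+1})$ for our choice of $Q$ and $d\le D$. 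Since $r(\vh)=|h_1|$ for such $\vh$ and there are $\asymp H$ admissible $h_1$, each contributing the same amount to \eqref{eq:dentalfloss}, this yields $\sD_{d,1}\ll H(\log x)^{-\varpi-1}$.

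\emph{The case $m_j\ge 2$.} For $\vh\in\cH_j$, write $S_j(d,\vh)=\sum_{n\le x/d}\e\bigl(h_jd^{m_j-1}\alpha_jn^{m_j}+g(n)\bigr)$ with $g\in\R[X]$ of degree $\le m_{j-1}<m_j$, and apply \eqref{eq:Bound log} of Lemma~\ref{lem:Weyl-h} with $m\defeq m_j$, $N\defeq\flo{x/d}$ and $h\defeq h_jd^{m_j-1}$, obtaining $S_j(d,\vh)\ll N(\log N)\Delta^{\lambda_j}$ with $\lambda_j=(m_j^2-m_j+2)^{-1}$ and, using \eqref{eq:a/q-alpha_j TWO} together with $\gcd(q,h)\le q\le Q$ and $N\ge 1$ to absorb $qN^{-m_j}$ and $\gcd(q,h)N^{-m_j+1}$ into $QN^{-m_j+1}$,
\[
\Delta\ll\frac{|h_j|\,d^{m_j-1}}{(\log Q)^{\varpi+1}}+\frac1N+\frac{Q}{N^{m_j-1}}.
\]
Because $N\asymp x/d$, $m_j\ge 2$, $|h_j|,d\ge 1$, and $Q$ is a fixed power of $x$ below $x$ (with $d\le D$), the last two terms are each $\ll|h_j|d^{m_j-1}(\log x)^{-\varpi-1}$; this requirement is tightest when $m_j=2$, where the final term has size $\asymp Q/N$, which is exactly why $Q$ cannot be taken all the way up to $x$. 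Hence $\Delta\ll|h_j|d^{m_j-1}(\log x)^{-\varpi-1}$ and so
\[
S_j(d,\vh)\ll N(\log x)\,|h_j|^{\lambda_j}d^{(m_j-1)\lambda_j}(\log x)^{-(\varpi+1)\lambda_j}.
\]
Substituting into \eqref{eq:dentalfloss}, using $\tfrac{d}{x}N\ll 1$ and the bound $\sum_{\vh\in\cH_j}|h_j|^{\lambda_j}/r(\vh)\ll H^{\lambda_j}(\log H)^{j-1}$ from \eqref{eq:Hsums}, yields $\sD_{d,j}\ll H^{\lambda_j}(\log H)^{j-1}d^{(m_j-1)\lambda_j}(\log x)^{1-(\varpi+1)\lambda_j}$, as claimed.

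The main obstacle is conceptual rather than computational: one has to recognize that a merely logarithmic lower bound on $q$ forces the Weyl-sum gain to be logarithmic in $Q$, so the right move is to pin $Q$ to a fixed power of $x$ (turning $\log Q$ into $\log x$) rather than optimizing it as in Lemma~\ref{lem:Ddj-bounds}; this is precisely what replaces the power saving of Lemma~\ref{lem:Ddj-bounds} by the logarithmic factor $(\log x)^{1-(\varpi+1)\lambda_j}$. The remaining steps — checking that the tail of $\Delta$ is negligible throughout $d\le D$, and the bookkeeping over $\vh\in\cH_j$ — are routine and proceed exactly as in the finite-type case.
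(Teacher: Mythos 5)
Your argument is correct and follows the paper's proof essentially verbatim: the same reduction to Lemmas~\ref{lem:LinSum-h} and~\ref{lem:Weyl-h} (via~\eqref{eq:Bound log}) with the approximations~\eqref{eq:a/q-alpha_j TWO}, and the same bookkeeping via~\eqref{eq:dentalfloss} and~\eqref{eq:Hsums}. The only (harmless) deviation is that you freeze $Q$ at a fixed power of $x$ and verify the tail terms of $\Delta$ are dominated for $d$ up to the relevant cutoff, whereas the paper picks $Q$ adaptively (e.g.\ $Q\defeq xH/(d(\log x)^{\varpi+2})$ for $m_j=1$ and $Q\defeq |h_j|x^{m_j-1}(\log x)^{-\varpi-1}$ for $m_j\ge 2$); both choices turn $\log Q$ into $\log x$ and yield identical bounds.
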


\begin{proof}
First, suppose that $m_j=1$.
For each vector $\vh=(h_1,0,\ldots,0)\in\cH_1$ we
apply Lemma~\ref{lem:LinSum-h} with
$N\defeq \fl{x/d}$ and $h\defeq h_1$, deriving the bound
$$
S_1(d,\vh)=\sum_{n\le x/d}\e(h_1\alpha_1n)\ll \frac{x|h_1|}{dq}+q
<\frac{x|h_1|}{d(\log Q)^{\varpi+1}}+Q,
$$
where we have used~\eqref{eq:a/q-alpha_j TWO} in the second step.
By~\eqref{eq:dentalfloss} we have
$$
\sD_{d,1}=\frac{d}{x}\sum_{\vh\in\cH_1}\frac{1}{|h_1|}
\(\frac{x|h_1|}{d(\log Q)^{\varpi+1}}+Q\)
\ll\frac{H}{(\log Q)^{\varpi+1}}+\frac{dQ\log H}{x}.
$$
Taking $Q\defeq xH/(d(\log x)^{\varpi+2})$ we obtain the bound
for $\sD_{d,1}$ stated in the lemma.

Next, suppose that $m_j\ge 2$. For each vector $\vh\in\cH_j$
we use the bound~\eqref{eq:Bound log} of Lemma~\ref{lem:Weyl-h}
with $N\defeq \fl{x/d}$ and $h\defeq h_jd^{m_j-1}$; taking into
account~\eqref{eq:a/q-alpha_j TWO} and using the crude inequality $\gcd(q,h)\le Q$, we find that the bound
$$
S_j(d,\vh)\ll \frac{x\log x}{d}
\(\frac{|h_j| d^{m_j-1}}{(\log Q)^{\varpi+1}}+\frac{d}{x}
+\frac{Qd_j^{m_j-1}}{x^{m_j-1}}\)^{\lambda_j} 
$$ 
holds with $\lambda_j$ as in~\eqref{eq:c1 def TWO}. To optimize, we choose
$$
Q\defeq \frac{|h_j|x^{m_j-1}}{(\log x)^{\varpi+1}},
$$
which leads to the bound
\begin{align*} 
S_j(d,\vh) & \ll \frac{x\log x}{d}
\(\frac{|h_j|d^{m_j-1}}{(\log x)^{\varpi+1}}+\frac{d}{x}\)^{\lambda_j}\\
& \ll  \frac{x\log x}{d}
\(\frac{|h_j|d^{m_j-1}}{(\log x)^{\varpi+1}} \)^{\lambda_j}
=\frac{x|h_j|^{\lambda_j} d^{(m_j-1)\lambda_j-1}}{(\log x)^{(\varpi+1)\lambda_j-1}}.
\end{align*}
Using~\eqref{eq:dentalfloss} and~\eqref{eq:Hsums},
we derive the bound for $\sD_{d,j}$ stated in the lemma.
\end{proof}

We now bound $\sD_d$ and estimate the overall
contribution to~\eqref{eq:birdsong} coming from integers $d\le D$,
considering separately the cases $k=1$ and $k\ge 2$. 

\bigskip\noindent{\sc Case~1: $(k=1)$.} In this case $m_j = m_1=1$. By~\eqref{eq:DdDdj} and Lemma~\ref{lem:Ddj-bounds TWO} we have
$$
 \sD_d\ll H^{-1}+ H(\log x)^{-\varpi-1}.
$$
The bound is optimized with the choice
$H\defeq (\log x)^{(\varpi+1)/2}$, which gives
$$
\sD_d\ll(\log x)^{-(\varpi+1)/2}.
$$
Using this result in~\eqref{eq:Asymp} and summing over all $d\le D$,
it follows that
\be 
\label{eq: Asym with E1 TWO}
\sum_{d\le D}\mu(d)\cdot
\bigl|\{n\le x/d:~\bnu_{d,n}\in\Omega_d\}\bigr|
=\frac{x}{\zeta(2)}+O(E_1),
\ee
where
\be\label{eq:E1 def TWO}
E_1\defeq xD^{-1}+x(\log x)^{-(\varpi+1)/2}\log D. 
\ee

\bigskip\noindent{\sc Case~2: $(k\ge 2)$.}
By~\eqref{eq:DdDdj} and Lemma~\ref{lem:Ddj-bounds TWO} we have
$$
\sD_d\ll \frac{1}{H}+\frac{H}{(\log x)^{\varpi+1}}
+(\log x)(\log H)^{k-1}\sum_{j=2}^k
\(\frac{Hd^{m_j-1}}{(\log x)^{\varpi+1}}\)^{\lambda_j},
$$
where $\lambda_j\defeq(m_j^2-m_j+2)^{-1}$. Using Lemma~\ref{lem:elem TWO},
we see that the terms in the above sum are nondecreasing as $j$ increases
provided that
\be\label{eq:Dcond TWO}
 d^{m_k-1}\le\frac{(\log x)^{\varpi+1}}{H}.
\ee
Assuming this for the moment, we have
\be\label{eq:marvelous}
\sD_d\ll \frac{1}{H}+\frac{H}{(\log x)^{\varpi+1}}
+ \frac{H^\lambda(\log H)^{k-1}d^{(m-1)\lambda}}{(\log x)^{(\varpi+1)\lambda-1}},
\ee
where
$$
m\defeq m_k\mand
\qquad \lambda\defeq (m^2-m+2)^{-1}.
$$
Clearly,~\eqref{eq:Dcond TWO} implies  
$H\le (\log x)^{\varpi+1}$, so we can drop
the second term in the bound~\eqref{eq:marvelous} since it is always 
dominated by the third term. We set
$$
H\defeq \(\frac{(\log x)^{(\varpi+1)\lambda-1}}{d^{(m-1)\lambda}}\)^{1/(\lambda+1)}
$$
to balance the two remaining terms in~\eqref{eq:marvelous}.
Note that
$$
d^{(m-1)\lambda}H^\lambda=\frac{(\log x)^{(\varpi+1)\lambda}}{H\log x}\le
(\log x)^{(\varpi+1)\lambda},
$$
and therefore the condition~\eqref{eq:Dcond TWO} is met.
Putting everything together, we get that
$$
\sD_d\ll(\log x)^{o(1)}
\(\frac{d^{(m-1)\lambda}}{(\log x)^{(\varpi+1)\lambda-1}}\)^{1/(\lambda+1)}
\qquad(x\to\infty).
$$
Inserting this bound into~\eqref{eq:Asymp}
and summing over $d\le D$, we find that
\be\label{eq: Asym with E2 TWO}
\sum_{d\le D}\mu(d)\cdot
\bigl|\{n\le x/d:~\bnu_{d,n}\in\Omega_d\}\bigr|
=\frac{x}{\zeta(k+1)}+ O(E_2),
\ee
where
\be\label{eq:E2 def TWO}
E_2=xD^{-k}+x(\log x)^{o(1)}
\(\frac{D^{(m-1)\lambda}}{(\log x)^{(\varpi+1)\lambda-1}}\)^{1/(\lambda+1)}
\qquad(x\to\infty).
\ee

\subsection{Final optimizations}
In {\sc Case~1}, we combine~\eqref{eq:E0 def TWO},
\eqref{eq: Asym with E1 TWO}, and~\eqref{eq:E1 def TWO}, 
which yields an overall error
$$
E\defeq E_0+E_1\ll xD^{-1}+x(\log x)^{-\varpi}+x(\log x)^{-(\varpi+1)/2}\log D
$$
Choosing $D\defeq (\log x)^{\varpi}$ in this case, we have
$$
E\ll x(\log x)^{-\varpi} + x(\log x)^{-(\varpi+1)/2+o(1)}\qquad(x\to\infty).
$$
Letting $\varpi$ approach $\tau_\star^{-1}$, we finish the proof of
Theorem~\ref{thm:fin-exp-type} in this case.

In {\sc Case~2}, we combine~\eqref{eq:E0 def TWO},
\eqref{eq: Asym with E2 TWO}, and~\eqref{eq:E2 def TWO}, 
which yields an overall error
$$
E\defeq E_0+E_2\ll xD^{-1}+ x(\log x)^{-\varpi}+x(\log x)^{o(1)}
\(\frac{D^{(m-1)\lambda}}{(\log x)^{(\varpi+1)\lambda-1}}\)^{1/(\lambda+1)}
$$
as $x\to\infty$. We balance this bound by taking
$$
D\defeq (\log x)^\vartheta,\qquad
\vartheta\defeq\frac{(\varpi+1)\lambda-1}{m\lambda+1},
$$
and with this choice, we can drop the middle term $x(\log x)^{-\varpi}$
since $\vartheta<\varpi$.
Letting $\varpi$ approach $\tau_\star^{-1}$, we finish the proof of
Theorem~\ref{thm:fin-exp-type}.

\section*{Acknowledgements}

The authors are grateful to the organizers of the
{\it Number Theory Web Seminar\/}, \url{https://www.ntwebseminar.org},
whose vision, hard work, and dedication helped to keep the spirit
of mathematical exchange alive and vibrant during the midst of a
pandemic. The present work was motivated by a captivating talk given
by Robert Tichy at this seminar.

We also thank Vitaly Bergelson and Florian Richter for their comments
and an anonymous referee for very important suggestions and for
pointing out some gaps in our initial arguments.

During the preparation of this paper,
I.~E.~Shparlinski was supported in part by the Australian Research Council 
Grants DP230100530 and DP230100534.


\begin{thebibliography}{99}


\bibitem{Bak-DI}
R.~C.~Baker,
\emph{Diophantine inequalities},
Oxford University Press Press, Oxford, 1986.

\bibitem{BergRich}
V.~Bergelson and F.~K.~Richter,
On the density of coprime tuples of the form
$(n,\fl{f_1(n)},\ldots,\fl{f_k(n)})$,
where $f_1,\ldots,f_k$ are functions from a Hardy field.
\emph{Number Theory -- Diophantine Problems,
Uniform Distribution and Applications},
109--135, Springer, Cham, 2017.

\bibitem{DrTi}
M.~Drmota and R.~F.~Tichy,
\emph{Sequences, discrepancies and applications.}
Lecture Notes in Mathematics, 1651.
Springer-Verlag, Berlin, 1997.

\bibitem{ErdLor}
P.~Erd\H os and G.~G.~Lorentz,
On the probability that $n$ and $g(n)$ are relatively prime.
\emph{Acta Arith.} 5 (1959), 35--44.

\bibitem{Ester}
T.~Estermann,
On the number of primitive lattice points in a parallelogram.
\emph{Canad.\ J.\ Math.} 5 (1953), 456--459.

\bibitem{IwKow} H.~Iwaniec and E.~Kowalski,
\emph{Analytic number theory}.
American Mathematical Society Colloquium Publications, 53.
American Mathematical Society, Providence, RI, 2004.

\bibitem{Khin}
A.~Y.~Khinchin,
Zur metrischen Theorie der diophantischen Approximationen.
\emph{Math.\ Z.} 24 (1926), no.~4, 706--714.

\bibitem{Kok}
J.~F.~Koksma,
Some theorems on Diophantine inequalities.
\emph{Math.\ Centrum Scriptum no.\ 5}, Amsterdam, 1950.

\bibitem{Roth1}
K.~F.~Roth,
Rational approximations to algebraic numbers.
\emph{Mathematika} 2 (1955), 1--20.

\bibitem{Roth2}
K.~F.~Roth,
Corrigendum to ``Rational
approximations to algebraic numbers''.
\emph{Mathematika} 2 (1955), 168.

\bibitem{ShTh} I.~E.~Shparlinski and J.~M.~Thuswaldner, 
Weyl sums over integers with digital restrictions.
\emph{Michigan Math. J.},  (to appear). 

\bibitem{Sz} P.~Sz\"usz, 
\"Uber ein Problem der Gleichverteilung.
\emph{Comptes Rendus Premier Congr{\`e}s des
Math\'ematiciens Hongrois, 27 Ao\^ut--2 Septembre 1950},
461--472. Akad\'emiai Kiad\'o, Budapest, 1952.

\bibitem{Wat} 
G.~L.~Watson,
On integers $n$ relatively prime to $\fl{\alpha n}$. 
\emph{Canad.\ J.\ Math.} 5 (1953), 451--455.

\end{thebibliography}
\end{document}